\newtheorem{theorem}{Theorem}[section]
\newtheorem{corollary}[theorem]{Corollary}
\newtheorem{proposition}[theorem]{Proposition}
\theoremstyle{definition}
\newtheorem{definition}[theorem]{Definition}
\numberwithin{equation}{section}
\newcommand{\R}{\mathcal{R}}
\newcommand{\Ra}[2]{\R_{#1,#2}}
\newcommand{\C}{\mathcal{C}}
\newcommand{\Ca}[2]{\C_{#1,#2}}
\newcommand{\Pc}{\mathcal{P}}
\newcommand{\Sc}[1]{\mathcal{S}_{#1}}
\newcommand{\T}{\mathcal{T}}
\newcommand{\Ta}[2]{\T_{#1,#2}}
\DeclareMathOperator{\gen}{{\it gen}}
\DeclareMathOperator{\lon}{{\it lon}}
\begin{document}
\title[The Topology of Knight's Tours on Surfaces]
{The Topology of Knight's Tours on Surfaces}

\author{Bradley Forrest}
\address{101 Vera King Farris Drive; Galloway, NJ 08205}
\email{bradley.forrest@stockton.edu}

\author{Kara Teehan}
\address{101 Vera King Farris Drive; Galloway, NJ 08205} 
\email{teehank@go.stockton.edu}

\subjclass[2010]{Primary 05C45, Secondary 57M15, 57M10, 57M05}

\keywords{Knight's Tour, Surfaces, Fundamental Groups} 

\begin{abstract}
We investigate the homotopy classes of closed knight's tours on cylinders and tori.  Specifically, we characterize the dimensions of cylindrical chessboards that admit closed knight's tours realizing the identity of the fundamental group and those that admit closed tours realizing a generator of the fundamental group.  We also produce analogous results for toroidal chessboards.
\end{abstract}

\maketitle

\section{\bf Introduction}

The knight's tour problem is a mainstay of recreational mathematics and a classical problem in graph theory.  One source of the problem's intrigue comes from its namesake chess piece, the knight, which moves in an ``L'' shaped pattern two squares either vertically or horizontally followed by one square in a perpendicular direction.  A \emph{knight's tour} is a sequence of moves in which a knight visits each square on a chessboard exactly once.  A tour is \emph{closed} if the knight can, in a single move, return to the starting square from the ending square; otherwise the tour is \emph{open}.  The traditional knight's tour problem is to find a closed knight's tour on a standard $8 \times 8$ chessboard.

Solutions to the traditional knight's tour problem have been known for centuries.  Cull and Decurtains generalized the problem to boards with dimensions $m \times n$ where $m, n \geq 5$.  Specifically, they proved that each such board supports an open tour and characterized which boards of this class support a closed tour \cite{cull-decurtins}.  In 1991, Schwenk completed this characterization, determining for each rectangular board whether or not the board admits a closed tour \cite{schwenk}.  Since that time there has been significant investigation of a variety of generalizations of the closed tour problem.  Watkins studied closed tours on cylinders and, together with Hoenigman, investigated closed tours on tori \cite{watkins}, \cite{watkins-hoenigman}.  Miller and Farnsworth recently extended this study by investigating closed knight's tours on both cylinders and tori with one square removed \cite{miller-farnsworth}.

Many researchers have investigated the knight's tour problem on higher dimensional objects, most notably rectangular prisms, but also objects of dimension greater than 3  \cite{demaio}, \cite{demaio-mathew}, \cite{erde}, \cite{erde-golenia-golenia}, \cite{yang-zhu-jiang-huang}.  There have also been investigations of generalized knight moves in which a knight moves in an $a$ by $b$ ``L'' shape instead of the standard $1$ by $2$ ``L'' shape \cite{chia-ong}, \cite{yang-zhu-jiang-huang}.

In this work, we explore knight's tours on surfaces, specifically $m \times n$ cylinders $C$, cylinders of length $m$ and circumference $n$, and $m \times n$ tori $T$, tori with latitudinal circumference $m$ and longitudinal circumference $n$.  For each of these boards we construct a graph where there is a vertex for each square on the board and one edge between a pair of vertices for each possible move that takes the knight from one of those squares to the other.  While this construction typically makes a graph, for some surfaces with small dimensions this can result in a pseudograph or a multigraph.  For example, the multigraph associated to the $2 \times 1$ cylinder consists of 2 vertices which are connected by 2 edges.  

These graphs map naturally to the $m \times n$ cylinder and $m \times n$ torus.  After choosing the image of a vertex in each surface to act as the base point, $c$ and $t$ in $C$ and $T$ respectively, directed closed tours in the graphs determine elements of $\pi_1(C,c)$ and $\pi_1(T,t)$.  In this work, for each of the following four conditions, we characterize the values of $m$ and $n$ that satisfy the condition.
\begin{itemize}
\item There exists a tour that realizes the identity of $\pi_1(C,c)$.
\item There exists a tour that realizes a generator of $\pi_1(C,c)$.
\item There exists a tour that realizes the identity of $\pi_1(T,t)$.
\item There exists a tour that realizes the homotopy class of a longitude in $\pi_1(T,t)$.
\end{itemize}
These characterizations are given in Theorems \ref{thm:cylnull}, \ref{thm:cylgen}, \ref{cor:ToriNull},  and \ref{thm:torusgen} respectively.
These topological questions can, in part, be answered by work on open tours on regular rectangular boards.  Specifically, open tours where the tour ends at a square from which, if the top and bottom of the board were identified, the knight could move to the starting square of the tour realize a generator of $\pi_1(C,c)$ and realize the homotopy class of a longitude in $\pi_1(T,t)$.  Significant work has been completed studying open tours on regular boards.  In particular, Cannon and Dolan showed that on rectangular chessboards where the product of $m$ and $n$ is even and both $m$ and $n$ are greater than or equal to six, given any pair of squares of opposite colors there exists an open tour starting at one of the squares and ending at the other \cite{cannon-dolan}. Ralston showed that all boards where $m$ and $n$ are both odd with $m \geq 7$ and $n \geq 5$ are ``odd-tourable'', meaning that for every pair of squares with the same color as the corner squares there is a knight's tour that begins on one square of the pair and ends on the other \cite{ralston}.

This paper is organized as follows.  Sections 2 and 3 present the required background information in Graph Theory and Algebraic Topology respectively.  We characterize the values of $m$ and $n$ for which the $m \times n$ cylinder and $m \times n$ torus admit tours that realize the identities of their respective fundamental groups in Section 4.  Section 5 characterizes the dimensions of cylinders that admit a tour that realizes a generator of the fundamental group, while Section 6 characterizes the dimensions of tori that admit a tour that realizes the homotopy class of a longitude.  In Section 7, we discuss potential future work.

\section{\bf Graph Theory Background}

In this section, we review relevant theorems of Schwenk and Watkins.  We also set our notation beginning with \emph{regular} $m \times n$ \emph{ boards} which are rectangular boards with $m$ vertical columns and $n$ horizontal rows.  Specifically, we label each square by an ordered pair of integers $(a,b)$ where $0 \leq a \leq m-1$ and $0 \leq b \leq n-1$.  A \emph{knight pair} is an ordered pair of integers $(x,y)$ so that $|x| = 2$ and $|y| = 1$ or vice versa.  A \emph{regular jump} is a knight pair $(x,y)$ together with a position on the board $(a,b)$ such that $(a+x, b+y)$ is also a position on the board.  Two regular jumps $(x_1,y_1), (a_1, b_1)$ and $(x_2, y_2), (a_2,b_2)$ are equivalent if $(x_1, y_1)~=~(-x_2, -y_2)$ and $(a_1 + x_1, b_1+y_1) = (a_2, b_2)$.  Equivalence classes of regular jumps are called \emph{regular moves}, and more directly there is a regular move incident to $(a_1, b_1)$ and $(a_2, b_2)$ if $(a_1 - a_2, b_1 - b_2)$ is a knight pair.  The graph with a vertex for each position on an $m \times n$ regular board and an edge for each regular move is denoted $\Ra{m}{n}$.  

We will usually specify an edge by listing the two vertices to which the edge is incident.  On $\Ra{m}{n}$ this causes no ambiguity.  However, we will abuse notation slightly and use this convention for the multigraphs modeling the cylinder and torus.  This is typically sufficient but when there is more than one edge incident to a pair of vertices and that choice of edge is relevant to our discussion, we will specify the edge by giving a vertex and a knight pair.

Our work focuses on closed knight's tours, which, for the remainder of this paper, we simply refer to as knight's tours.  Extending the work of Cull and Decurtins, Schwenk characterized which regular $m \times n$ boards admit knight's tours.

\begin{theorem}[Schwenk \cite{schwenk}]
The graph $\Ra{m}{n}$, with $m \geq n$ and at least one of $m$ and $n$ greater than 1, admits a Hamiltonian tour if and only if 
\begin{itemize}
\item $m$ and $n$ are not simultaneously odd,
\item $n=1$, $2$, or $4$, or
\item $n = 3$ while $m=4$, $6$, or $8$. 
\end{itemize}
\label{thm:schwenk} 
\end{theorem}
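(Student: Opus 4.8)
Since the statement is Schwenk's theorem, in the paper it is cited rather than reproved; were one to prove it from scratch, the plan would be to treat the two implications separately. Up to the symmetry $m \leftrightarrow n$ assume $m \ge n$. Because the cases $n \in \{1,2\}$ and the case ``$m,n$ both odd'' are among the listed exceptions, the ``only if'' direction amounts to showing that no closed tour exists on those boards, on $\Ra{m}{4}$, or on $3\times 4$, $3\times 6$, $3\times 8$; and the ``if'' direction reduces to constructing a closed tour on every board with $m \ge n \ge 3$, $mn$ even, $n \ne 4$, and $(n,m) \notin \{(3,4),(3,6),(3,8)\}$.

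\emph{Impossibility.} When $n = 1$ the difference of any two positions is $(a_1-a_2,0)$, which is never a knight pair, so $\Ra{m}{1}$ has no edges. When $n = 2$ a row change of $\pm 2$ is impossible, so every move uses a knight pair $(\pm 2,\pm 1)$ and changes the column index by exactly $2$; hence no move joins columns of opposite parity and $\Ra{m}{2}$ is disconnected. If $m$ and $n$ are both odd, the checkerboard coloring forces a closed tour to be a colour-alternating cycle of even length, contradicting $mn$ odd. The delicate case is $n = 4$: overlay the checkerboard coloring with the ``strip'' coloring making rows $0,3$ red and rows $1,2$ blue. Every knight-neighbour of a red square lies in row $1$ or $2$ and is therefore blue, so along a Hamiltonian cycle no two reds are consecutive; with $2m$ reds and $2m$ blues this forces strict red/blue alternation, and the checkerboard colours alternate as well. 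Then all squares at even positions along the cycle share one strip colour and one checkerboard colour --- but only $m$ squares carry any prescribed such pair of colours, whereas $2m$ are needed, a contradiction. Finally $3\times 4$, $3\times 6$, $3\times 8$ succumb to a finite edge-forcing argument: a width-$3$ board has many degree-$2$ vertices, so the standard rules (a degree-$2$ vertex forces both its edges in; two forced edges at a vertex force the rest out; if all but two of a vertex's edges are forced out, the remaining two are forced in) propagate and determine most of any Hamiltonian cycle, and for exactly these three widths the forced edges close into a proper subcycle (or the rules hit a contradiction) before the board is covered.

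\emph{Existence.} Fix a finite collection of ``base'' boards --- small non-exceptional boards together with the width-$3$ boards $3\times 10$ and $3\times 12$ --- and on each one exhibit a closed tour containing a designated edge near one vertical side, chosen so that when two such boards are set side by side the designated edges may be deleted and the two tours spliced into a single closed tour on the union. A gluing (equivalently, cutting) lemma plus induction --- adjoining one column at a time when $n$ is even and two at a time when $n$ is odd --- then produces closed tours on every non-exceptional board with $m \ge n \ge 3$; the symmetric statement covers $n > m$. For $m, n \ge 5$ this existence half is precisely the theorem of Cull and Decurtins, while Schwenk contributed the $n \le 4$ impossibility arguments and the width-$3$ constructions.

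The main obstacle has two parts. The one genuinely clever step is the simultaneous double-coloring that eliminates $n = 4$: finding the second coloring and making the alternation-plus-counting argument bite is the real idea, and it does not transfer to other widths. The more laborious difficulty is organizing the existence half --- selecting a base collection whose tours carry interface edges compatible with the gluing lemma and verifying the lemma in every configuration that arises --- where each individual check is routine but their sheer number is what spreads this theorem across several papers.
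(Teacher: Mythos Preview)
You are right that the paper merely cites Schwenk's theorem and does not reprove it; there is no proof in the paper to compare against. Your sketch tracks the original argument faithfully: the checkerboard parity obstruction for odd $mn$, the disconnection arguments for $n\le 2$, Schwenk's two-coloring trick for $n=4$, the forced-edge elimination of $3\times 4$, $3\times 6$, $3\times 8$, and the Cull--De~Curtins gluing induction for existence are exactly the ingredients of the published proof, so nothing further is needed here.
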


Note that $\Ra{m}{n}$ supports a Hamiltonian tour if and only if $\Ra{n}{m}$ also supports a Hamiltonian tour.  For this reason, we will also apply Theorem \ref{thm:schwenk} when discussing $\Ra{m}{n}$ for boards with $n \geq m$.  

We primarily discuss knight's tours on surfaces.  One surface that we focus on is the $m \times n$ \emph{cylinder}.  We construct an $m \times n$ cylinder by identifying the top and bottom borders of a regular $m \times n$ board.  This identification adds knight moves that cross the identified border.  More specifically, a \emph{cylindrical jump} is a knight pair $(x,y)$ together with a position on the regular board $(a,b)$ such that $(a+x, (b+y) \mod n)$ is a position on the board.  Two cylindrical jumps $(x_1,y_1), (a_1, b_1)$ and $(x_2, y_2), (a_2,b_2)$ are equivalent if $(x_1, y_1) = (-x_2, -y_2)$ and $(a_1 + x_1, (b_1+y_1) \mod n) = (a_2, b_2)$.   Equivalence classes of cylindrical jumps that are not regular moves are called \emph{cylindrical moves}.  The multigraph with a vertex for each position on a regular $m \times n$ board and an edge for each regular or cylindrical move is denoted $\Ca{m}{n}$.  Note that $\Ra{m}{n}$ is a canonical subgraph of $\Ca{m}{n}$.

Most research conducted on knight's tours on cylinders has focused on the quotient graph of $\Ca{m}{n}$ given by identifying all edges incident to the same pair of vertices.  We apply these results to $\Ca{m}{n}$ since, with one exception, there exists a Hamiltonian cycle on $\Ca{m}{n}$ if and only if such a cycle exists on the quotient.  The only exception occurs when $m=2$ and $n=1$, when there is a Hamiltonian cycle on $\Ca{2}{1}$ but no such cycle on the quotient.  Watkins characterized the values of $m$ and $n$ for which the quotient admits a Hamiltonian cycle, and we adapt this result to $\Ca{m}{n}$.

\begin{theorem}[Watkins \cite{watkins}; pg 71]
The multigraph $\Ca{m}{n}$ admits a Hamiltonian cycle unless $m=1$ and $n>1$, or $m=2$ or $4$ and $n$ is even.
\label{thm:watkins}
\end{theorem}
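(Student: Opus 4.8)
The plan is to derive this statement from Watkins' original characterization \cite{watkins}, which is phrased for the quotient graph $\overline{\Ca{m}{n}}$ obtained from $\Ca{m}{n}$ by collapsing each class of parallel edges to a single edge. The natural quotient map $q\colon\Ca{m}{n}\to\overline{\Ca{m}{n}}$ is a bijection on vertex sets, is surjective on edges, and sends an edge of $\Ca{m}{n}$ to an edge of $\overline{\Ca{m}{n}}$ joining the images of its two endpoints. The core of the argument will be the claim that, for every pair $(m,n)$ other than $(2,1)$, the multigraph $\Ca{m}{n}$ admits a Hamiltonian cycle if and only if $\overline{\Ca{m}{n}}$ does. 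Granting the claim, the theorem follows by reading off Watkins' list of exceptional dimensions for the quotient and correcting it at the single pair $(2,1)$.

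I would establish the claim in two directions. Given a Hamiltonian cycle $H$ of $\overline{\Ca{m}{n}}$, note that $H$ has length $mn$, so when $mn\geq 3$ no two edges of $H$ join the same pair of vertices; choosing, for each edge of $H$, an arbitrary edge of $\Ca{m}{n}$ in its preimage then produces $mn$ distinct edges of $\Ca{m}{n}$ that assemble into a Hamiltonian cycle there. Conversely, given a Hamiltonian cycle $H'$ of $\Ca{m}{n}$, the image $q(H')$ is a closed walk meeting every vertex exactly once; since $q$ is injective on vertices, $q(H')$ is a Hamiltonian cycle of $\overline{\Ca{m}{n}}$ unless $H'$ is the length-two cycle formed by a pair of parallel edges, which is possible only when $mn=2$. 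It therefore remains to check the boards with $mn\leq 2$ directly: for $\Ca{1}{1}$ the multigraph equals its quotient; $\Ca{1}{2}$ is edgeless, since a single column admits no knight pair, and so again agrees with its quotient; whereas $\Ca{2}{1}$ consists of two vertices joined by the two parallel edges arising from the knight pairs $(1,2)$ and $(1,-2)$ and hence has a Hamiltonian cycle, while its quotient (two vertices, one edge) does not. Thus $(2,1)$ is the unique board on which $\Ca{m}{n}$ and $\overline{\Ca{m}{n}}$ disagree.

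Finally, I would combine these facts. By Watkins' theorem \cite{watkins}, $\overline{\Ca{m}{n}}$ admits a Hamiltonian cycle unless $m=1$ and $n>1$, or $m\in\{2,4\}$ and $n$ is even, or $(m,n)=(2,1)$. Transporting this across the correspondence above leaves the list unchanged except that $\Ca{2}{1}$ does admit a Hamiltonian cycle; and since $n=1$ is odd, the pair $(2,1)$ is already permitted by the exception list in the statement, so the two descriptions coincide. I expect the only real difficulty to be bookkeeping: confirming that no small board other than $(2,1)$ creates a parallel-edge discrepancy, and matching the boundary behavior (in particular $n=1$ versus $n$ even when $m\in\{2,4\}$) against the precise form of the characterization in \cite{watkins}.
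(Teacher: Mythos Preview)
Your proposal is correct and matches the paper's approach exactly. The paper does not give a formal proof of this theorem; it cites Watkins' result for the quotient graph and, in the paragraph immediately preceding the statement, notes that the multigraph $\Ca{m}{n}$ and its quotient agree on the existence of Hamiltonian cycles with the sole exception $(m,n)=(2,1)$---precisely the reduction you carry out in detail.
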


In addition to studying knight's tours on cylinders, we will also investigate tours on tori.  We construct an $m \times n$ \emph{torus} by identifying the sides of the $m \times n$ cylinder, which adds new possible knight moves.  More specifically, a \emph{toroidal jump} is a knight pair $(x,y)$ together with a position on the regular board $(a,b)$ such that $((a+x) \mod m, (b+y) \mod n)$ is a position on the board.  We say that two toroidal jumps $(x_1,y_1), (a_1, b_1)$ and $(x_2, y_2), (a_2,b_2)$ are equivalent whenever $(x_1, y_1) = (-x_2, -y_2)$ and $((a_1 + x_1) ~\mod m, (b_1+y_1)\mod n)=(a_2, b_2)$.   Equivalence classes of toroidal jumps that are not regular moves are called \emph{toroidal moves}.  The multigraph with a vertex for each position on a regular $m \times n$ board and an edge for each regular or toroidal move is denoted $\Ta{m}{n}$.  For some small values of $m$ and $n$, this construction gives a pseudograph.  For example, $\Ta{1}{1}$ is a single vertex together with 4 edges.  Note that $\Ca{m}{n}$ is a canonical submultigraph of $\Ta{m}{n}$. 

We make significant use of covering space theory in our study of knight's tours on cylinders and tori.  To that end, we define covering graphs $\Sc{m}$ and $\Pc$ respectively for $\Ca{m}{n}$ and $\Ta{m}{n}$.  These covering graphs model the infinite strip of width $m$ and the plane, the respective universal covers of the cylinder and torus.

\begin{figure}[t]
\subfloat[]{\fbox{\includegraphics[scale = .35]{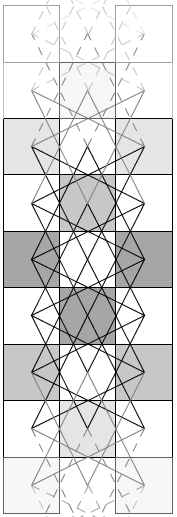}}}
\hfill
\subfloat[]{\fbox{\includegraphics[scale = .35]{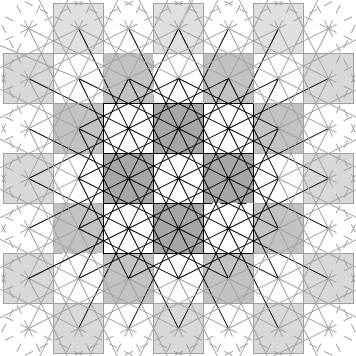}}}
\caption{(A) Subset of $\Sc{3}$ (B) Subset of $\Pc$}
\label{fig:SnPlane}
\end{figure}

The graph $\Sc{m}$ has a vertex for each ordered pair of integers $(a,b)$ where $0 \leq a \leq m-1$ and an edge between $(a_1,b_1)$ and $(a_2, b_2)$ if  $(a_1 - a_2, b_1 - b_2)$ is a knight pair.  The vertex labeling makes $\Sc{l}$ a subgraph of $\Sc{m}$ when $l < m$.  Note that there is a combinatorial map from $\phi_c \colon \Sc{m} \to \Ca{m}{n}$ given by mapping the vertex $(a,b)$ to $(a, b \mod n)$ and the edge $(x,y),(a,b)$ to $(x,y),(a, b \mod n)$.  A subset of $\Sc{3}$ is shown in Frame (A) of Figure \ref{fig:SnPlane}.

The plane is the universal cover of the torus, and to model the plane we build the graph $\Pc$ with a vertex for each ordered pair of integers and an edge between $(a_1,b_1)$ and $(a_2, b_2)$  if  $(a_1 - a_2, b_1 - b_2)$ is a knight pair.   There exists a combinatorial map from $\phi_t \colon \Pc \to \Ta{m}{n}$ given by mapping the vertex $(a,b)$ to $(a \mod m, b \mod n)$ and the edge $(x,y),(a,b)$ to $(x,y),(a \mod m, b \mod n)$.  A subset of $\Pc$ is shown in Frame (B) of Figure \ref{fig:SnPlane}.

Note that $\phi_c \colon \Sc{m} \to \Ca{m}{n}$ and $\phi_t \colon \Pc \to \Ta{m}{n}$ are both covering maps.  One important consequence of this is that edge cycles in $\Ca{m}{n}$ and $\Ta{m}{n}$ lift to edge paths in $\Sc{m}$ and $\Pc$.  More specifically, let $f \colon I \to \Ca{m}{n}$ and $g \colon I \to \Ta{m}{n}$ be edge cycles, where $I$ is the closed unit interval, and let $u$ and $v$ be lifts of $f(0)$ and $g(0)$.  Then there exist unique lifts $\tilde{f} \colon I \to \Sc{m}$ and $\tilde{g} \colon I \to \Pc$ so that $\tilde{f}(0) = u$ and $\tilde{g}(0) = v$.

Frame (A) of Figure \ref{fig:Lift} shows a loop in $\Ca{5}{2}$ where the dotted lines denote cylindrical moves, while Frame (B) is a lift of the loop in Frame (A) to $\Sc{5}$.  Similarly, Frame (C) of Figure \ref{fig:Lift} shows a loop in $\Ta{4}{4}$ where the dotted lines denote toroidal moves, while Frame (D) is a lift of the loop in Frame (C) to $\Pc$. 

Our figures throughout this work show tours on $\Ca{m}{n}$, $\Ta{m}{n}$, $\Sc{m}$, and $\Pc$.  In these figures, we denote the base point $(0,0)$ with a dot.  In $\Ca{m}{n}$ and $\Ta{m}{n}$, this is always the bottom left square in the figure.  We tile $\Sc{m}$ and $\Pc$ by fundamental domains, and show these domains by darkened lines.  We choose the bottom leftmost square in one of these domains to be our base point $(0,0)$.  In each figure we coordinatize each board to correspond with standard cartesian coordinates.

\begin{figure}[t]
\subfloat[]{\fbox{\includegraphics[scale = .25]{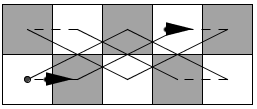}}}
\hfill
\subfloat[]{\fbox{\includegraphics[scale = .25]{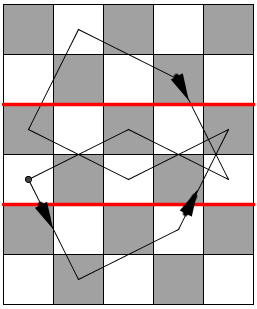}}}
\hfill
\subfloat[]{\fbox{\includegraphics[scale = .25]{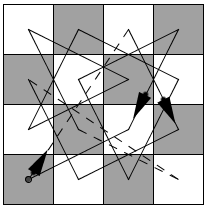}}}
\hfill
\subfloat[]{\fbox{\includegraphics[scale = .25]{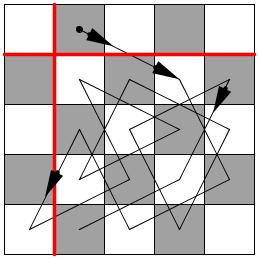}}}
\caption{(A) Tour in $\Ca{5}{2}$  (B) Lift of tour in Frame (A) to $\Sc{5}$  (C) Tour in $\Ta{4}{4}$  (D) Lift of tour in Frame (C) to $\Pc$}
\label{fig:Lift}
\end{figure}

\section{\bf Covering Graphs Background}

In this section, we apply covering space theory to knight's tours.  To do this, we will define standard maps from $\Ca{m}{n}$ to a cylinder $C$ with length $m$ and  circumference $n$ and from $\Ta{m}{n}$ to a torus $T$ with latitudinal  circumference $m$ and longitudinal circumference $n$.  We largely treat $\Ca{m}{n}$ and $\Ta{m}{n}$ as subsets of $C$ and $T$ respectively, and, after base point vertices $c$ and $t$ are chosen, Hamiltonian cycles in $\Ca{m}{n}$ and $\Ta{m}{n}$ determine elements of $\pi_1(C,c)$ and $\pi_1(T,t)$ respectively.

To define our maps $\Ca{m}{n} \to C$ and $\Ta{m}{n} \to T$, we will use the universal covers of $C$ and $T$.  To that end, let $P$ be the cartesian plane and $S \subset P$ be the closed infinite strip of width $m$ bounded by the vertical lines $x = 0$ and $x = m$. We define the map $i_c \colon \Sc{m} \to S$ where $i_c$ maps the vertex $(a,b)$ to $(a +.5, b+.5)$ and maps each edge to the unique geodesic line segment between the images of the vertices to which it is incident.  We define the cylinder $C$ as the quotient of $S$ given by identifying all pairs $(a,b)$ and $(a,b')$ where $b$ and $b'$ are equivalent mod $n$.  We can label the points in $C$ by their unique representative in $[0,m] \times [0, n)$.  With this labeling, the covering map $p_c \colon S \to C$ is given by $p_c(a,b) = (a, b \mod n)$.  Lastly, there exists a map $j_c \colon \Ca{m}{n} \to C$ that makes the following diagram commute.
\[
    \xymatrix{
      \Sc{m} \ar[r]^{\textstyle i_c}\ar[d]_{\textstyle\phi_c} & S\ar[d]^{\textstyle p_c}   \\
    \Ca{m}{n} \ar[r]_{\textstyle j_c} & C
    }
\]
For a vertex $(a,b) \in \Ca{m}{n}$, let $j_c(a,b) = (a+.5, (b+.5)\mod n)$.  Given an edge $(x,y),(a,b)$ of $\Ca{m}{n}$, the image of the edge under $j_c$ is not uniquely defined by the image of the vertices to which it is incident.  However, the preimage under $\phi_c$ consists of all of the edges of the form $(x,y)(a,b')$ where $b' \mod n = b$, and under the composition $p_c \circ i_c$ these edges map to a unique path between $j_c(a,b)$ and $j_c(a+x, (b + y)\mod n)$.  More specifically, $j_c$ maps $(x,y)(a,b)$ to the image of the path $f \colon I \to C$ where the path is given by $f(t) = (a + .5 + xt, (b + .5 + yt) \mod n)$.  Note that $j_c$ is well-defined as $(-x, -y)(a+x, (b+y) \mod n)$ gives the same path.  Frame (A) of Figure \ref{fig:image} shows $j_c(\Ca{3}{3})$ in $C$, where the white dots are the images of the vertices of $\Ca{3}{3}$.

\begin{figure}[t]
\subfloat[]{\fbox{\includegraphics[scale = .17]{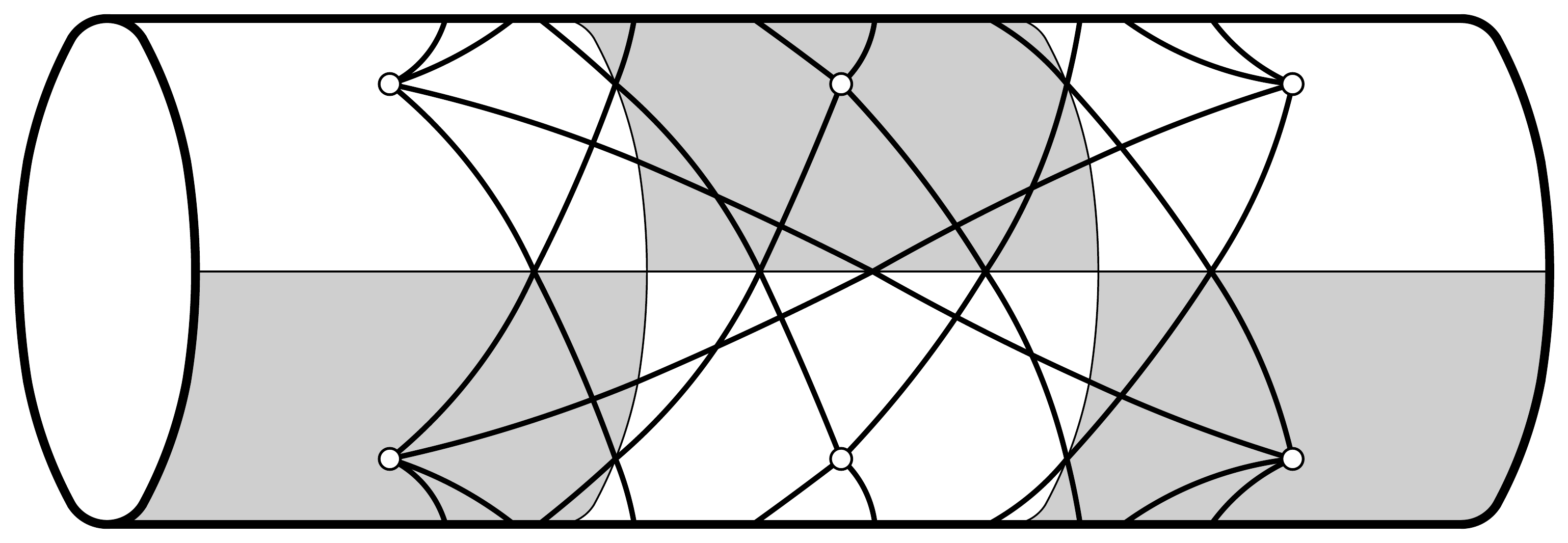}}}
\hfill
\subfloat[]{\fbox{\includegraphics[scale = .15]{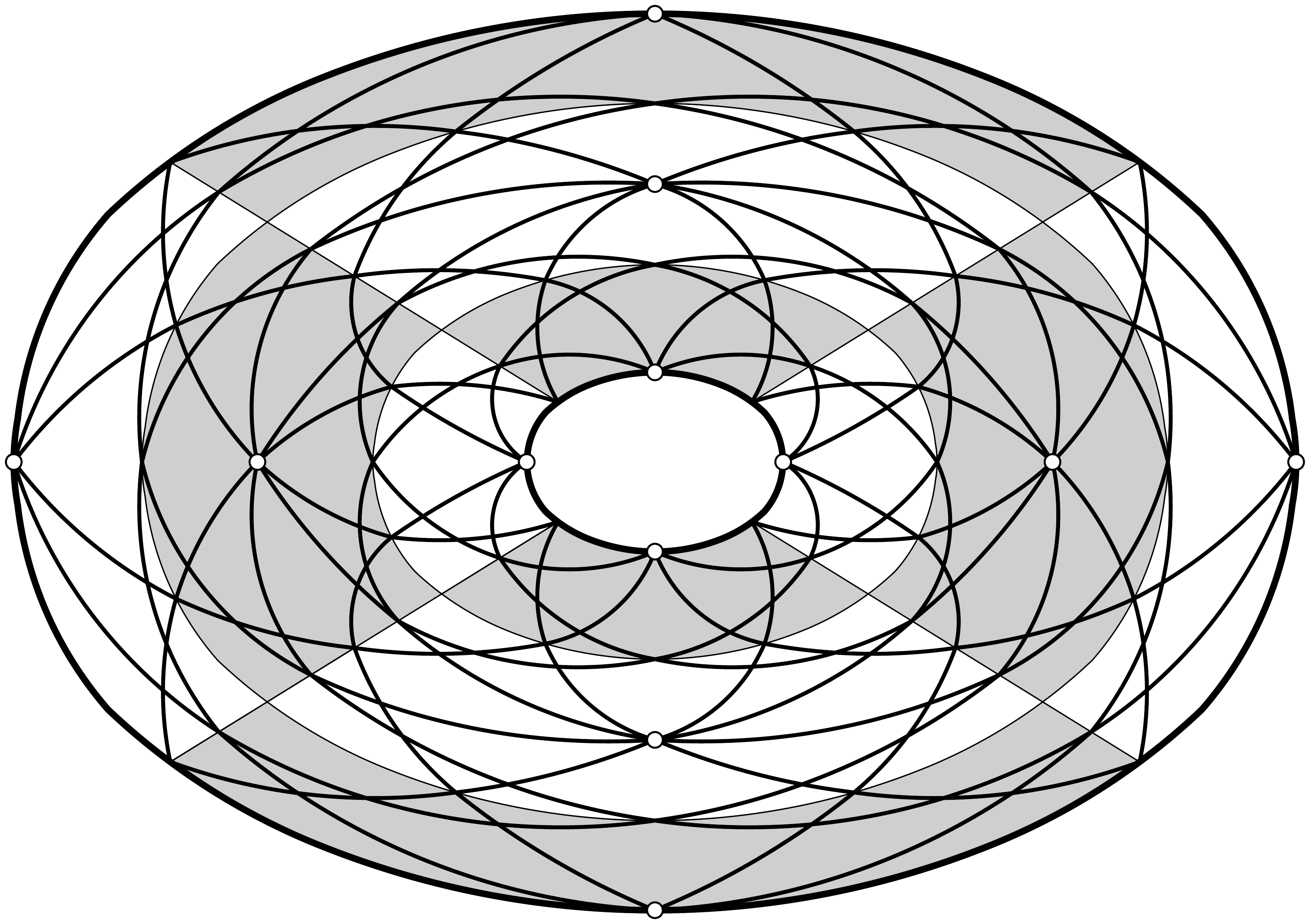}}}
\caption{(A) Image of $\Ca{3}{3}$ in the $3 \times 3$ cylinder  (B) Image of $\Ta{4}{4}$ in the $4 \times 4$ torus}
\label{fig:image}
\end{figure}

Let $i_t \colon \Pc \to P$ be the map that sends the vertex $(a,b)$ to $(a +.5, b+.5)$ and maps each edge to the unique geodesic line segment between the images of the vertices to which it is incident.  We define the torus $T$ as the quotient of $P$ given by identifying all pairs $(a,b)$ and $(a',b')$ where $a$ and $a'$ are equivalent mod $m$ and $b$ and $b'$ are equivalent mod $n$.  We can label the points in $T$ by their unique representative in $[0,m) \times [0, n)$.  With this labeling, the covering map $p_t \colon P \to T$ is given by the formula $p_t(a,b) = (a \mod m, b \mod n)$.  Lastly, there exists a map $j_t \colon \Ta{m}{n} \to T$ that makes the following diagram commute.

\[
    \xymatrix{
      \Pc \ar[r]^{\textstyle i_t}\ar[d]_{\textstyle\phi_t} & P\ar[d]^{\textstyle p_t}   \\
    \Ta{m}{n} \ar[r]_{\textstyle j_t} & T
    }
\]

For vertex $(a,b)$ of $\Ta{m}{n}$, let $j_t(a,b) = ((a + .5) \mod m, (b+.5) \mod n)$. Given an edge $(x,y),(a,b)$ of $\Ta{m}{n}$, the image of the edge under $j_t$ is not uniquely defined by the image of the vertices to which it is incident.   However, the preimage under $\phi_t$ consists of all edges of the form $(x,y)(a',b')$ where $a' \mod m = a$ and $b' \mod n = b$.  Under the composition of functions $p_t \circ i_t$ these edges map to a unique path between the points  $j_t(a,b)$ and $j_t((a+x) \mod m, (b + y) \mod n)$.  More specifically, $j_t$ maps $(x,y)(a,b)$ to the image of the path $f \colon I \to T$ given by $f(t) = ((a + .5 + xt) \mod m, (b + .5 + yt) \mod n)$.  Note that $j_t$ is well-defined as $(-x, -y)((a+x) \mod m, (b + y) \mod n)$ maps to the same path.  Frame (B) of Figure \ref{fig:image} shows $j_c(\Ta{4}{4})$ in $T$, where the white dots are the images of the vertices of $\Ta{4}{4}$.

We are now in a position to apply covering space theory to knight's tours on cylinders and tori.  Recall the following classical theorem:

\begin{theorem}
Let $\tilde{X}$ be the universal cover of $X$ with covering map $p \colon \tilde{X} \to X$, and $x \in X$ with $\tilde{x} \in p^{-1}(x)$.  Let $f, g \colon (I, 0) \to (X, x)$ be two loops with lifts $\tilde{f}, \tilde{g} \colon (I, 0) \to (\tilde{X}, \tilde{x})$ where $I$ is the closed unit interval.  Then $f$ and $g$ are path homotopic if and only if $\tilde{f}(1) = \tilde{g}(1)$.
\label{thm:cover}
\end{theorem}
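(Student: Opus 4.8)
The plan is to deduce the statement from the two standard pillars of elementary covering space theory: the homotopy lifting property and the uniqueness of lifts (of paths and, more generally, of maps of connected spaces agreeing at a point). I would argue the two implications separately.

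For the forward direction, suppose $H \colon I \times I \to X$ is a path homotopy from $f$ to $g$, so that $H(s,0) = f(s)$, $H(s,1) = g(s)$, and $H(0,t) = H(1,t) = x$ for all $s, t \in I$. By the homotopy lifting property of $p$, there is a lift $\tilde H \colon I \times I \to \tilde X$ with $\tilde H(0,0) = \tilde x$. The first step is to identify the four edges of the square under $\tilde H$. The restriction of $\tilde H$ to $\{0\} \times I$ is a lift of the constant path $t \mapsto x$ beginning at $\tilde x$; by uniqueness of path lifting it must be the constant path at $\tilde x$, so $\tilde H(0,1) = \tilde x$. Consequently, the restriction of $\tilde H$ to $I \times \{0\}$ is a lift of $f$ starting at $\tilde x$ and hence equals $\tilde f$, while the restriction to $I \times \{1\}$ is a lift of $g$ starting at $\tilde H(0,1) = \tilde x$ and hence equals $\tilde g$. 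Finally, the restriction of $\tilde H$ to $\{1\} \times I$ is again a lift of the constant path at $x$, hence constant. Chaining these equalities along the boundary of the square gives $\tilde f(1) = \tilde H(1,0) = \tilde H(1,1) = \tilde g(1)$.

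For the converse, suppose $\tilde f(1) = \tilde g(1)$. Then $\tilde f$ and $\tilde g$ are paths in $\tilde X$ sharing both endpoints, namely $\tilde x$ and their common terminal point. Since $\tilde X$ is the universal cover of $X$ it is simply connected, so $\tilde f$ and $\tilde g$ are path homotopic; let $\tilde H \colon I \times I \to \tilde X$ be such a homotopy, fixing $0$ and $1$. Then $p \circ \tilde H \colon I \times I \to X$ is a path homotopy from $p \circ \tilde f = f$ to $p \circ \tilde g = g$, which is what we wanted.

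I do not anticipate a genuine obstacle here: the argument is entirely formal once the homotopy lifting property is granted. The only point requiring a moment's care is the repeated appeal to uniqueness of lifts in the forward direction to pin down the two vertical edges of $\tilde H$ — in particular the observation that a lift of a constant path is constant — as this is what forces $\tilde f(1) = \tilde g(1)$ rather than merely $p(\tilde f(1)) = p(\tilde g(1))$. An alternative is to cite this as the well-known fact that the end-point map $\pi_1(X,x) \to p^{-1}(x)$, $[f] \mapsto \tilde f(1)$, is a well-defined injection, but the self-contained proof above is short enough to record directly.
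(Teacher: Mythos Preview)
Your proof is correct and is the standard textbook argument for this classical fact. Note, however, that the paper does not actually prove this theorem: it is introduced with ``Recall the following classical theorem'' and stated without proof, serving only as background input to Corollary~\ref{cor:cover}. So there is no proof in the paper to compare against; your write-up simply supplies the omitted justification, and does so cleanly.
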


In our context, Theorem \ref{thm:cover} has important consequences.  Let the loop $f \colon (I,0) \to (\Ca{m}{n},(0,0))$ be a Hamiltonian tour, and consider its image $j_c \circ f$.  The homotopy class of $j_c \circ f$ defines an element of $\pi_1(C, c)$ where $c = j_c(0,0)$.  Similarly, Hamiltonian tours in $\Ta{m}{n}$ define elements of $\pi_1(T, t)$ where $t = j_t(0,0)$.

There are two particular curves in $C$ and $T$ that we will study.  Let $\gen \colon (I,0) \to (C,c)$ be given by $\gen(t) = (.5, (.5 + nt) \mod n)$ and $\gen'$ be $\gen$ traveled in the opposite direction.  Note that the homotopy classes of $\gen$ and $\gen'$ are the generators of $\pi_1(C, c)$.  Also, let $\lon \colon (I,0) \to (T,t)$ given by $\lon(t) = (.5, (.5 + nt) \mod n)$ and $\lon'$ be $\lon$ traveled in the opposite direction.  The loops $\lon$ and $\lon'$ are \emph{the longitudinal loops} of $T$.  We are particularly interested in Hamiltonian tours in $\Ca{m}{n}$ whose images in $C$ are homotopic to $\gen$ or $\gen'$ and Hamiltonian tours in $\Ta{m}{n}$ whose images in $T$ are homotopic to $\lon$ or $\lon'$.

\begin{definition}
Let $f \colon (I,0) \to (\Ca{m}{n}, (0,0))$ and $g \colon (I,0) \to (\Ta{m}{n},(0,0))$ be Hamiltonian tours.  Then $f$ is \emph{nullhomotopic} if the homotopy class of $j_c \circ f$ is the identity in $\pi_1(C,c)$, and $f$ \emph{realizes a generator} if $j_c \circ f$ is homotopic to $\gen$ or $\gen'$.  Further, $g$ is \emph{nullhomotopic} if the homotopy class of $j_t \circ g$ is the identity in $\pi_1(T,t)$, and $g$ \emph{realizes the longitude} if $j_t \circ g$ is homotopic to $\lon$ or $\lon'$.
\end{definition}

Note that $f$ lifts to a path $\tilde{f}$ in $\Sc{m}$, and by Theorem \ref{thm:cover} we can see that the endpoint of this path determines the homotopy class of $j_c \circ f$.  This argument is made precise for both cylinders and tori in the corollary below.

\begin{corollary}
Let $f \colon (I, 0) \to (\Ca{m}{n}, (0,0))$ and $g \colon (I, 0) \to (\Ta{m}{n},(0,0))$ be Hamiltonian tours with lifts $\tilde{f} \colon I \to \Sc{m}$ and $\tilde{g} \colon I \to \Pc$ with given initial points $\tilde{f}(0) = (0,0)$ and $\tilde{g}(0) = (0,0)$.  Then:
\begin{enumerate}
\item  The loop $f$ is nullhomotopic if and only if $\tilde{f}(1) = (0,0)$.
\item The loop $f$ realizes a generator of $\pi_1(C,c)$ if and only if either $\tilde{f}(1)~=~(0,n)$ or $\tilde{f}(1) = (0,-n)$.
\item The loop $g$ is nullhomotopic if and only if $\tilde{g}(1) = (0,0)$.
\item The loop $g$ realizes a longitude if and only if either $\tilde{g}(1) = (0,n)$ or $\tilde{g}(1) = (0,-n)$.
\end{enumerate}
\label{cor:cover}
\end{corollary}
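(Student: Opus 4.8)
The plan is to deduce all four statements from the classical Theorem~\ref{thm:cover} applied to the universal covers $p_c\colon S\to C$ and $p_t\colon P\to T$, after identifying the combinatorial edge-path lifts $\tilde f$ and $\tilde g$ in the graphs $\Sc m$ and $\Pc$ with the topological lifts of $j_c\circ f$ and $j_t\circ g$ to $S$ and $P$. Since $S$ and $P$ are contractible and $p_c$, $p_t$ are covering maps, these are genuinely the universal coverings of $C$ and $T$, so Theorem~\ref{thm:cover} is available.

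The first step I would carry out is the following observation, stated for the cylinder (the torus case is identical with $P,\Pc,\phi_t,p_t,j_t$ in place of $S,\Sc m,\phi_c,p_c,j_c$): the path $i_c\circ\tilde f\colon I\to S$ is the unique lift of $j_c\circ f$ through $p_c$ with initial point $i_c(0,0)=(.5,.5)$. Indeed, commutativity of the defining square gives $p_c\circ(i_c\circ\tilde f)=j_c\circ\phi_c\circ\tilde f=j_c\circ f$, since $\phi_c\circ\tilde f=f$ because $\tilde f$ is a lift of $f$; and $(i_c\circ\tilde f)(0)=i_c(0,0)$, so uniqueness of path lifting identifies $i_c\circ\tilde f$ with the topological lift. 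Hence the endpoint of that lift is $i_c(\tilde f(1))$. Moreover $f$ is an edge loop ending at the vertex $(0,0)$, so $\tilde f$ ends at a vertex of $\Sc m$ over $(0,0)$, i.e. $\tilde f(1)=(0,kn)$ for some $k\in\mathbb Z$; since $i_c$ is injective on vertices, $i_c(\tilde f(1))$ and $\tilde f(1)$ determine one another.

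Next I would record the lift endpoints of the comparison loops. The constant loop at $c$ lifts to the constant path at $(.5,.5)=i_c(0,0)$; the loop $\gen$ lifts through $p_c$ to $t\mapsto(.5,.5+nt)$, which ends at $(.5,.5+n)=i_c(0,n)$; and $\gen'$ lifts to the path ending at $i_c(0,-n)$. Likewise, on the torus the constant loop at $t$ lifts to the constant path at $i_t(0,0)$, while $\lon$ and $\lon'$ lift to paths ending at $i_t(0,n)$ and $i_t(0,-n)$ respectively. Now apply Theorem~\ref{thm:cover}: $j_c\circ f$ is nullhomotopic iff $i_c(\tilde f(1))=i_c(0,0)$, i.e. iff $\tilde f(1)=(0,0)$, which is (1); and $j_c\circ f$ realizes a generator iff $i_c(\tilde f(1))\in\{i_c(0,n),i_c(0,-n)\}$, i.e. iff $\tilde f(1)\in\{(0,n),(0,-n)\}$, which is (2). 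The identical argument for $p_t\colon P\to T$ yields (3) and (4).

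The step requiring the most care is the identification in the second paragraph, that the combinatorial lift living in the graph $\Sc m$ (resp. $\Pc$) really is the topological lift to the universal cover; this reduces to commutativity of the square together with uniqueness of path lifting, but it must be accompanied by the (routine) verification that $S$ and $P$ are simply connected so that Theorem~\ref{thm:cover} applies and that $p_c,p_t$ are the universal covering maps. Everything else is bookkeeping with the explicit formulas for $\gen$, $\gen'$, $\lon$, $\lon'$ and the quotient descriptions of $C$ and $T$; there is no analytic content.
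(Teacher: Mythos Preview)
Your proposal is correct and follows essentially the same approach as the paper: both identify $i_c\circ\tilde f$ (resp.\ $i_t\circ\tilde g$) as the lift of $j_c\circ f$ (resp.\ $j_t\circ g$) to the universal cover via the commutative square, then apply Theorem~\ref{thm:cover} by comparing endpoints with those of the lifts of the constant loop, $\gen$, $\gen'$, $\lon$, and $\lon'$. Your write-up is more explicit in justifying the identification step and in computing the comparison lifts, but the argument is the same.
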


\begin{proof}
Note that $i_c \circ \tilde{f}$ is a lift of $j_c \circ f$ with $i_c \circ \tilde{f}(0) = (.5,.5)$.

Consider statement (1).  The constant map with image $c$ lifts to a constant map with image $(.5,.5)$ and $j_c \circ f$ is nullhomotopic if and only if $i_c \circ \tilde{f}$ is homotopic to this lift of the constant map.  Hence, by Theorem \ref{thm:cover}, the map $j_c \circ f$ is nullhomotopic if and only if $i_c \circ \tilde{f}(1) = (.5, .5)$ which is equivalent to $\tilde{f}(1) = (0,0)$.  An analogous argument proves statement (3).

Consider statement (2).  An argument analogous to the previous paragraph's discussion that uses $\gen$ and $\gen'$ in place of the constant map establishes statement (2). Using this argument with $\lon$ and $\lon'$ proves statement (4).
\end{proof}

Important two colorings on $\Sc{m}$ and $\Pc$ are given by coloring the vertex $(a,b)$ red if $a+b$ is even and blue if $a+b$ is odd.  The graphs $\Sc{m}$ and $\Pc$ are indeed bipartite as the parity of the sum changes when any edge is traversed.

\begin{proposition}
Let $n$ be odd.  If $m$ is odd and at least one of $m$ and $n$ is larger than 1, then there is no nullhomotopic tour on $\Ca{m}{n}$ nor on $\Ta{m}{n}$.  If $m$ is even, then there is no tour on $\Ca{m}{n}$ that realizes a generator and no tour on $\Ta{m}{n}$ that realizes a longitude.
\label{prop:oddeven}
\end{proposition}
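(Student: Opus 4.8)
\section*{Proof proposal}

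The plan is to argue entirely by a two-coloring (parity) bookkeeping on the universal covers $\Sc{m}$ and $\Pc$, and to use Corollary~\ref{cor:cover} to convert statements about homotopy classes into statements about the endpoint of a lift. The crucial observation is that, although the identifications producing $\Ca{m}{n}$ and $\Ta{m}{n}$ may destroy bipartiteness, the covers $\Sc{m}$ and $\Pc$ are genuinely bipartite, with $(a,b)$ colored by the parity of $a+b$: every edge corresponds to a knight pair $(x,y)$ with $|x|+|y|=3$, so traversing an edge changes $a+b$ by an odd integer and reverses the color. In particular the base point $(0,0)$ is red.

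First I would pin down the length of a lifted tour. A Hamiltonian tour on $\Ca{m}{n}$ (resp. $\Ta{m}{n}$) is a closed edge cycle through all $mn$ vertices, so it traverses exactly $mn$ edges; hence its lift $\tilde f$ to $\Sc{m}$ (resp. $\tilde g$ to $\Pc$) with $\tilde f(0)=(0,0)$ is an edge path of length $mn$ starting at the red vertex $(0,0)$. Since each edge flips the color, $\tilde f(1)$ (resp. $\tilde g(1)$) is red when $mn$ is even and blue when $mn$ is odd. I would also record the colors of the three relevant target vertices: $(0,0)$ is red, while $(0,n)$ and $(0,-n)$ both have the parity of $n$.

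The two assertions then follow by comparing colors. For the first, with $m$ and $n$ both odd we have $mn$ odd, so for any Hamiltonian tour $f$ on $\Ca{m}{n}$ or $g$ on $\Ta{m}{n}$ the endpoints $\tilde f(1)$, $\tilde g(1)$ are blue and hence are not $(0,0)$; by parts (1) and (3) of Corollary~\ref{cor:cover}, neither $f$ nor $g$ is nullhomotopic. (The hypothesis that one of $m,n$ exceeds $1$ excludes only the degenerate case $m=n=1$, where the constant tour would be trivially nullhomotopic.) For the second, with $m$ even and $n$ odd we have $mn$ even, so $\tilde f(1)$ and $\tilde g(1)$ are red; but since $n$ is odd, $(0,n)$ and $(0,-n)$ are blue, so $\tilde f(1)\notin\{(0,n),(0,-n)\}$ and likewise for $\tilde g(1)$. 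By parts (2) and (4) of Corollary~\ref{cor:cover}, no tour on $\Ca{m}{n}$ realizes a generator and no tour on $\Ta{m}{n}$ realizes a longitude.

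I do not expect a serious obstacle here. The only points needing care are the edge count (a Hamiltonian cycle on $mn$ vertices has exactly $mn$ edges, so the lift has length $mn$) and the conceptual point that this is genuinely a statement about the covers: it is precisely because $\Sc{m}$ and $\Pc$ are bipartite while the quotients $\Ca{m}{n}$ and $\Ta{m}{n}$ need not be that passing to the universal cover supplies the parity constraint which, through Corollary~\ref{cor:cover}, obstructs the relevant homotopy classes.
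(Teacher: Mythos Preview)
Your proposal is correct and follows essentially the same approach as the paper: both arguments exploit the bipartiteness of the covers $\Sc{m}$ and $\Pc$ under the coloring by the parity of $a+b$, count the $mn$ edges in a lifted Hamiltonian tour, and invoke Corollary~\ref{cor:cover} to rule out the relevant endpoint. The only cosmetic difference is that the paper phrases each case as a proof by contradiction while you compute the endpoint color directly, but the content is identical.
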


\begin{proof}
Let $m$ be odd, and for the sake of contradiction, suppose that $f \colon (I,0) \to (\Ca{m}{n}, (0,0))$ is a nullhomotopic tour with lift $\tilde{f}$.  Then, by Corollary \ref{cor:cover}, $\tilde{f}$ is an edge cycle that traverses an odd number of edges in a bipartite graph, which cannot exist.  An analogous argument proves this statement for $\Ta{m}{n}$.

Let $m$ be even, and for the sake of contradiction, suppose that the loop $f \colon (I,0) \to (\Ca{m}{n}, (0,0))$ realizes a generator and has lift $\tilde{f}$.  Note that, by Corollary \ref{cor:cover}, the starting and ending vertices of $\tilde{f}$ have opposite colors.  Then $\tilde{f}$ is an edge path in a bipartite graph that traverses an even number of edges but has oppositely colored starting and ending vertices, which cannot exist.  An analogous argument proves that there is no tour on $\Ta{m}{n}$ that realizes the longitude.
\end{proof}

\section{\bf Nullhomotopic Tours on Cylinders}

In this section, we characterize the values of $m$ and $n$ for which $\Ca{m}{n}$ and $\Ta{m}{n}$ support nullhomotopic knight's tours.  More specifically, for $\Ca{m}{n}$ we prove:

\begin{theorem}
The multigraph $\Ca{m}{n}$ supports a nullhomotopic tour if and only if none of the following hold:
\begin{itemize}
\item $m$ and $n$ are simultaneously odd and at least one of $m$ and $n$ is greater than 1, 
\item $m = 1$ and $n > 1$,
\item $m = 2$, or
\item $m = 4$ and $n$ is even.
\end{itemize}
\label{thm:cylnull}
\end{theorem}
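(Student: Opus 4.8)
The plan is to prove the biconditional by establishing necessity (each listed condition rules out a nullhomotopic tour) and sufficiency (if none of them holds, one can be built) separately. Three of the four necessity cases are immediate: if $m$ and $n$ are both odd with $\max(m,n)>1$ I would cite Proposition~\ref{prop:oddeven}; and if $m=1$ with $n>1$, or $m=4$ with $n$ even, Theorem~\ref{thm:watkins} already forbids any Hamiltonian cycle, hence any nullhomotopic one. The case $m=2$ needs a dedicated argument, since $\Ca{2}{n}$ does carry Hamiltonian cycles when $n$ is odd.

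For $m=2$ I would reason as follows. A strip of width $2$ admits no knight pair of the form $(\pm2,\pm1)$, so the only edges of $\Sc{2}$ come from the pairs $(\pm1,\pm2)$, and each vertex $(a,b)$ has exactly the two neighbors $(1-a,b+2)$ and $(1-a,b-2)$; thus $\Sc{2}$ is $2$-regular. A connected $2$-regular graph on infinitely many vertices is a bi-infinite path, so $\Sc{2}$ is a disjoint union of bi-infinite paths and in particular is acyclic. If some Hamiltonian tour $f$ of $\Ca{2}{n}$ were nullhomotopic, then by Corollary~\ref{cor:cover} its lift based at $(0,0)$ would be a closed edge-loop in $\Sc{2}$, and being the lift of a simple closed curve it would be a cycle of $\Sc{2}$ --- impossible. (When $n$ is even there is nothing to check, as $\Ca{2}{n}$ has no Hamiltonian cycle.)

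For sufficiency, the main tool I would use is a lifting observation: $\phi_c$ restricts to a graph isomorphism from the subgraph of $\Sc{m}$ induced on $\{(a,b):0\le b\le n-1\}$ --- the canonical copy of $\Ra{m}{n}$ inside $\Sc{m}$ --- onto $\Ra{m}{n}\subseteq\Ca{m}{n}$, because all moves among those vertices are regular. Hence a Hamiltonian tour of $\Ra{m}{n}$, traversed starting at $(0,0)$, lifts to a genuinely closed edge-loop in $\Sc{m}$ based at $(0,0)$, so by Corollary~\ref{cor:cover} it is nullhomotopic in $\Ca{m}{n}$. By Theorem~\ref{thm:schwenk} this settles every allowed pair $(m,n)$ for which $\Ra{m}{n}$ is Hamiltonian, leaving the allowed pairs with $\min(m,n)\in\{1,2,4\}$ and those with $\min(m,n)=3$ and $\max(m,n)\in\{4,6,8\}$ --- in particular the whole family $\Ca{4}{n}$ with $n$ odd, since $\Ra{4}{n}$ is never Hamiltonian. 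For these I would do two things. First, exhibit explicit base cases: a nullhomotopic tour on $\Ca{m}{1}$ for even $m\ge4$ (a column zigzag such as $0,2,\dots,m-2,m-1,m-3,\dots,1,0$, selecting at each step between the two parallel edges so that the $\pm1$'s and $\pm2$'s contributed to the second coordinate of the lift cancel), a nullhomotopic tour on $\Ca{m}{2}$ for $m\ge3$ with $m\ne4$, and the degenerate tour on $\Ca{1}{1}$. Second, prove an extension lemma $n\mapsto n+2$: given a nullhomotopic tour on $\Ca{m}{n}$ that meets a chosen latitudinal cut in a prescribed local pattern, insert a two-row band (a copy of $\Ra{m}{2}$) at that latitude and reconnect the severed arcs by routing paths through the band, choosing the new routing so that its contribution to the second coordinate of the lift equals that of the crossing moves it replaces; this keeps the winding number zero, hence preserves nullhomotopy. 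Since for every $m\ne1$ the allowed set of values of $n$ is closed under adding $2$, the base cases together with this extension reach all remaining pairs (for instance the chain from $\Ca{4}{1}$ covers every $\Ca{4}{n}$ with $n$ odd, and the chain from $\Ca{6}{1}$ covers $\Ca{6}{3}$).

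The hard part will be the sufficiency constructions outside the range of Theorem~\ref{thm:schwenk}, and above all the extension lemma: I expect the difficulty to be in formulating a strengthened inductive hypothesis that records exactly how the tour meets the inserted cut --- compatibly with the rather rigid combinatorics of the band $\Ra{m}{2}$, which decomposes into two disjoint paths --- and then in verifying that the reconnection through the band can always be chosen so the winding number is left unchanged rather than accidentally producing a tour that realizes a generator. Producing the explicit base tours on $\Ca{m}{1}$ and $\Ca{m}{2}$ and checking that the knight-pair choices there really do make the lift close up is the other place requiring care; the family $\Ca{4}{n}$ with $n$ odd is the most delicate, being entirely out of reach of the lifting argument and obtainable only from $\Ca{4}{1}$ through the extension.
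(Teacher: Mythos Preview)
Your necessity argument matches the paper's exactly: Proposition~\ref{prop:oddeven} for the both-odd case, Theorem~\ref{thm:watkins} for $m=1$ with $n>1$ and for $m=4$ with $n$ even, and the acyclicity of $\Sc{2}$ (every vertex has degree~$2$, so the graph is a disjoint union of bi-infinite paths) for $m=2$.

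Your sufficiency strategy is organized differently. The paper, like you, invokes Theorem~\ref{thm:schwenk} whenever $\Ra{m}{n}$ is Hamiltonian and then treats the residual small cases. But where you propose a single extension lemma $n\mapsto n+2$ launched from base families at $n=1,2$, the paper proceeds case by case and inducts mostly on $m$ rather than on $n$: it builds $\Ca{m}{1}$ and $\Ca{m}{2}$ by inductions adding $2$ to $m$ (your zigzag for $n=1$ is essentially the paper's base figure), handles $\Ca{m}{3}$ and $\Ca{3}{n}$ by exhibiting the three missing pictures for $\max\in\{4,6,8\}$ and deferring to Schwenk for $\max\ge 10$, builds $\Ca{m}{4}$ by an induction adding $3$ to $m$, and treats $\Ca{4}{n}$ with $n$ odd by the one $n\mapsto n+2$ induction the paper does perform --- carrying a very specific strengthened hypothesis (four named edges present in the lift) that drives the reattachment.

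The practical payoff of the paper's organization is that it never needs a band-insertion lemma uniform in $m$: by inducting on $m$ within a fixed circumference, each inductive step splices through only one or two prescribed edges, and the pattern is easy to maintain. Your proposed $n\mapsto n+2$ lemma, by contrast, must work for every width $m$ simultaneously, and since the inserted band $\Ra{m}{2}$ breaks into paths whose number and length depend on $m$, the ``prescribed local pattern'' at the cut has to scale with $m$ as well. That is not obviously impossible, but it is a genuinely harder statement than anything the paper proves, and you rightly flag it as the crux. If you pursue it, the paper's $m=4$ case shows what a workable strengthened hypothesis looks like for one fixed width; you may, however, find it simpler to drop the uniformity and, as the paper does, choose the induction variable to suit each strip.
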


One consequence of Theorem \ref{thm:cylnull} is that when $m$ and $n$ are both greater than or equal to 5, $\Ca{m}{n}$ admits a nullhomotopic tour if and only if $\Ra{m}{n}$ admits a closed tour.  This is not true for smaller boards; there are many cases when at least one of $m$ or $n$ is less than 5 for which $\Ca{m}{n}$ admits a nullhomotopic tour but $\Ra{m}{n}$ does not admit a closed tour.

We take a case by case approach to prove Theorem \ref{thm:cylnull}.  When $m, n \geq 5$, if at least one of $m$ or $n$ is even, Theorem \ref{thm:schwenk} states that there exists a tour on a regular $m \times n$ board and thus there exists a nullhomotopic tour on a cylinder.  When both $m$ and $n$ are odd, the $m \times n$ cylinder does not support a nullhomotopic tour by Proposition \ref{prop:oddeven}.  We are left to consider $\Ca{m}{n}$ when at least one of $m$ or $n$ is less than 5.  For many board sizes, we will exhibit a nullhomotopic tour by using statement (1) of Corollary \ref{cor:cover} and producing a closed edge path in $\Sc{m}$ that maps to the desired nullhomotopic tour.  See Figures \ref{fig:1xn} to \ref{fig:4xn} for examples.

\subsection*{{\bf $m \times 1$}}

Note that $\Ca{1}{1}$ is a vertex with no edges, and thus  supports a nullhomotopic tour.   The multigraph $\Ca{2}{1}$ consists of two vertices connected by two edges, and the tour produced by these edges is not nullhomotopic.  By Proposition \ref{prop:oddeven}, cylinders where $m$ is odd and $m > 1$ cannot support a nullhomotopic tour.

The only remaining case is when $m$ is even with $m \geq 4$, and we use induction to produce nullhomotopic tours on these boards.  Consider as our base case the edge path in $\Sc{4}$ shown in Frame (A) of Figure \ref{fig:1xn}.   Suppose that we have a path in $\Sc{m}$ using edge $(m-2, -\frac{m}{2}+1)-(m-1, -\frac{m}{2}+3)$ and whose image in $\Ca{m}{1}$ is a nullhomotopic tour.  Taking this cycle in $\Sc{m+2}$ and replacing $(m-2, -\frac{m}{2}+1)-(m-1, -\frac{m}{2}+3)$ with the edge path $(m-2, -\frac{m}{2}+1)-(m, -\frac{m}{2})-(m+1, -\frac{m}{2}+2)-(m-1, -\frac{m}{2}+3)$ creates a path in $\Sc{m+2}$ that uses the edge $(m, -\frac{m}{2})-(m+1, -\frac{m}{2}+2)$ and whose image in $\Ca{m+2}{1}$ is a nullhomotopic tour, completing the induction.  The result of applying this process to our base case tour is shown in Frame (B) of Figure \ref{fig:1xn}.  Visually, our induction step adds a $\sqrt{5} \times \sqrt{5}$ square to a growing rectangle.

\begin{figure}[t]
\subfloat[]{\fbox{\includegraphics[scale = .25]{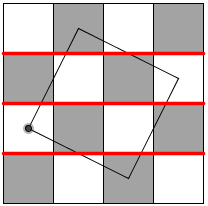}}}
\qquad \qquad \qquad \qquad
\subfloat[]{\fbox{\includegraphics[scale = .25]{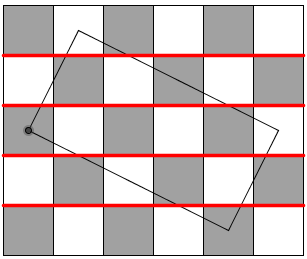}}}
\caption{Lifts of nullhomotopic tours in $\Ca{m}{1}$ for $m=4$ and $6$}
\label{fig:1xn}
\end{figure}

\subsection*{{\bf $1 \times n$}}
By Theorem \ref{thm:watkins}, $\Ca{1}{n}$ does not support a tour when $n>1$.  For the remainder of the section, we assume that $m$ and $n$ are greater than 1.

\subsection*{{\bf $m \times 2$}}
By Theorem \ref{thm:watkins}, $\Ca{2}{2}$ and $\Ca{4}{2}$ cannot support tours.  We produce nullhomotopic tours on $\Ca{m}{2}$ for all values of $m \geq 3$ other than 4 by induction, taking the paths shown in Frames (A) and (C) of Figure \ref{fig:2xn} as our base cases.  Suppose there is a cycle in $\Sc{m}$ that includes the edges $(m-1, \frac{-p}{2}+\frac{5}{2})-(m-2, \frac{-p}{2}+\frac{9}{2})$ and $(m-1, \frac{p}{2}-\frac{3}{2})-(m-2, \frac{p}{2}-\frac{7}{2})$ where $p = m - 1$ if $m$ is even and $p = m$ if $m$ is odd, whose image in $\Ca{m}{2}$ is a nullhomotopic tour.  Since $\Sc{m} \subset \Sc{m+2}$, this constitutes a cycle in $\Sc{m+2}$.  We can replace the edges listed above with two paths consisting of 3 edges each: $(m-1, \frac{-p}{2}+\frac{5}{2})-(m+1, \frac{-p}{2}+\frac{3}{2})-(m, \frac{-p}{2}+\frac{7}{2})-(m-2, \frac{-p}{2}+\frac{9}{2})$ and $(m-1, \frac{p}{2}-\frac{3}{2})-(m+1, \frac{p}{2}-\frac{1}{2})-(m, \frac{p}{2}-\frac{5}{2})-(m-2, \frac{p}{2}-\frac{7}{2})$.  Note that this newly formed path in $\Sc{m+2}$ includes the edges $(m+1, \frac{-p}{2}+\frac{3}{2})-(m, \frac{-p}{2}+\frac{7}{2})$ and $(m+1, \frac{p}{2}-\frac{1}{2})-(m, \frac{p}{2}-\frac{5}{2})$, and the image of this path in $\Ca{m+2}{2}$ is a nullhomotopic tour.  This completes our induction.  Frames (B) and (D) of Figure \ref{fig:2xn} show the results of applying this argument to Frames (A) and (C) respectively.  Visually, this has the effect of adding two parallelograms to the path in $S$.

\begin{figure}[t]
\subfloat[]{\fbox{\includegraphics[scale = .265]{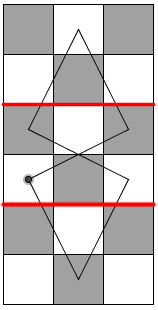}}}
\hfill
\subfloat[]{\fbox{\includegraphics[scale = .2]{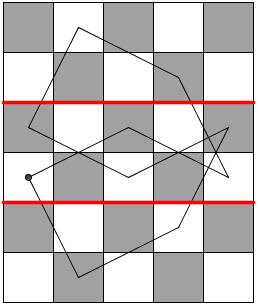}}}
\hfill
\subfloat[]{\fbox{\includegraphics[scale = .2]{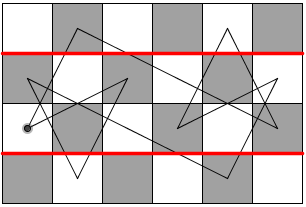}}}
\hfill
\subfloat[]{\fbox{\includegraphics[scale = .2]{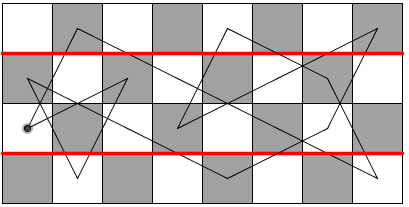}}}
\caption{Lifts of nullhomotopic tours in $\Ca{m}{2}$ for $m=3, 5, 6,$ and $8$}
\label{fig:2xn}
\end{figure}

\subsection*{{\bf $2 \times n$}}
The multigraph $\Ca{2}{n}$ cannot support a tour because there are no simple edge cycles in $\Sc{2}$.  For the remainder of this section, we assume that $m$ and $n$ are greater than 2.

\subsection*{{\bf $m \times 3$}}
By Proposition \ref{prop:oddeven}, $\Ca{m}{3}$ does not support a nullhomotopic tour for odd $m$.  In Frames (A), (B), and (C) of Figure \ref{fig:3xn}, we have paths in $\Sc{4}$, $\Sc{6}$, and $\Sc{8}$ whose respective images in $\Ca{4}{3}$, $\Ca{6}{3}$, and $\Ca{8}{3}$ are nullhomotopic tours. By Theorem \ref{thm:schwenk}, $\Ca{m}{3}$ supports a nullhomotopic tour for even $m$ when $m \geq 10$.

\begin{figure}[t]
\subfloat[]{\fbox{\includegraphics[scale = .2]{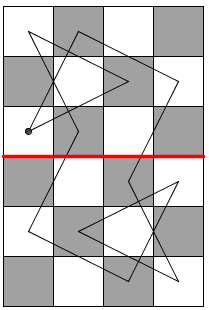}}}
\hfill
\subfloat[]{\fbox{\includegraphics[scale = .15]{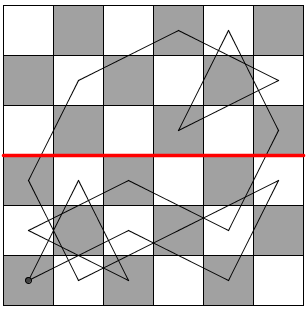}}}
\hfill
\subfloat[]{\fbox{\includegraphics[scale = .15]{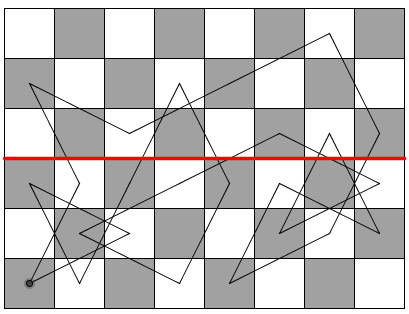}}}
\hfill
\subfloat[]{\fbox{\includegraphics[scale = .26]{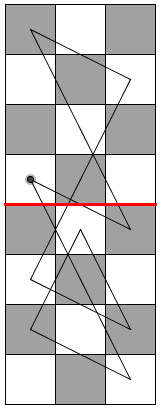}}}
\hfill
\subfloat[]{\fbox{\includegraphics[scale = .29]{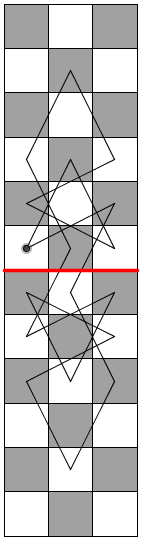}}}
\hfill
\subfloat[]{\fbox{\includegraphics[scale = .29]{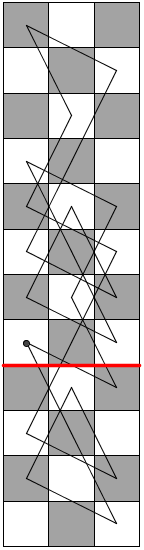}}}
\caption{(A-C) Lifts of nullhomotopic tours in $\Ca{m}{3}$ for $m=4, 6,$ and $8$  (D-F) Lifts of nullhomotopic tours in $\Ca{3}{n}$ for $n=4, 6,$ and $8$}
\label{fig:3xn}
\end{figure}

\subsection*{{\bf $3 \times n$}}
By Proposition \ref{prop:oddeven}, $\Ca{3}{n}$ does not support a nullhomotopic tour for odd values of $n$.  By Frames (D), (E), and (F) of Figure \ref{fig:3xn}, we have paths in $\Sc{3}$ whose respective images in $\Ca{3}{4}$, $\Ca{3}{6}$, and $\Ca{3}{8}$ are nullhomotopic tours.  By Theorem \ref{thm:schwenk}, $\Ca{3}{n}$ supports a nullhomotopic tour for even values of $n$ where $n \geq 10$.  For the rest of this section, we will assume that $m$ and $n$ are greater than $3$.

\subsection*{{\bf $m \times 4$}}
By Theorem \ref{thm:watkins}, the graph $\Ca{4}{4}$ cannot support a nullhomotopic tour.  For all other values of $m > 3$, we use induction to construct a nullhomotopic tour in $\Ca{m}{4}$.  We take the paths in $\Sc{3}$, $\Sc{5}$, and $\Sc{7}$ shown in Frame (D) of Figure \ref{fig:3xn}, and Frames (A) and (C) of Figure \ref{fig:4xn} as our base cases.  Assume there exists a path in $\Sc{m}$ that includes the edge $(m-2, -1)-(m-1, -3)$ and whose image in $\Ca{m}{4}$ is a nullhomotopic tour.   Taking our path to be in $\Sc{m+3}$, we translate the path shown in Frame (D) of Figure \ref{fig:3xn} with the edge $(0,0)-(1,-2)$ removed $m$ units to the right.  We concatenate the translated path with our original by replacing edge $(m-2, -1)-(m-1, -3)$ with the edges $(m-2, -1)~-~(m,~0)$ and $(m-1,~-3)~-~(m+1,~ -2)$.  Note that this new path in $\Sc{m+3}$ includes the edge $(m+1, -1)-(m+2, -3)$ and that its image in $\Ca{m+3}{4}$ is nullhomotopic, completing the induction.  So there exists a path in $\Sc{m}$ for all $m \neq 1, 2, 4$ whose image in $\Ca{m}{4}$ is a nullhomotopic tour.  An example of this process is shown in Frame (B) of Figure \ref{fig:4xn}.
 
\begin{figure}[t]
\subfloat[]{\fbox{\includegraphics[scale = .20]{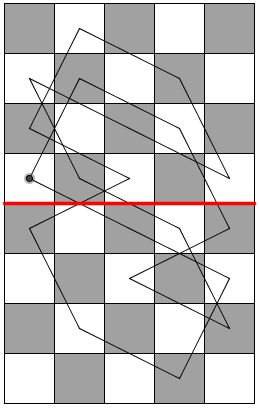}}}
\hfill
\subfloat[]{\fbox{\includegraphics[scale = .20]{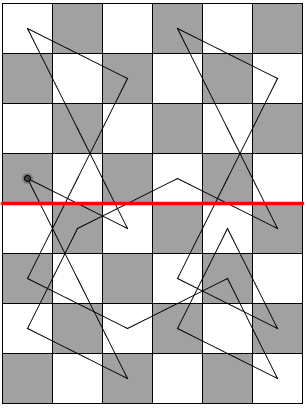}}}
\hfill
\subfloat[]{\fbox{\includegraphics[scale = .20]{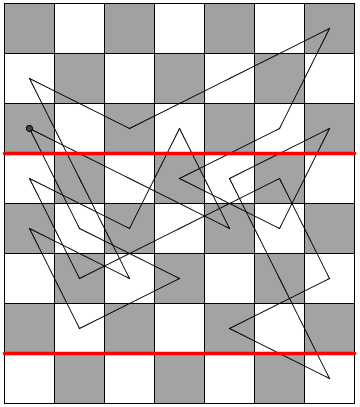}}}
\hfill
\subfloat[]{\fbox{\includegraphics[scale = .24]{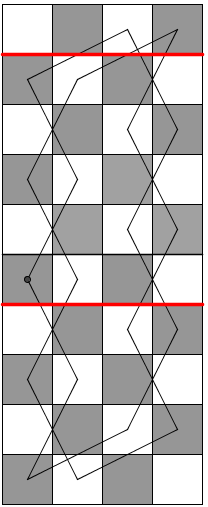}}}
\hfill
\subfloat[]{\fbox{\includegraphics[scale = .26]{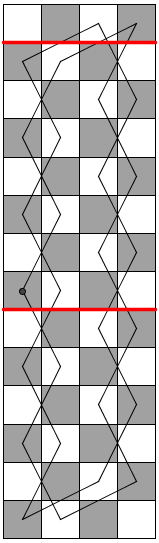}}}
\caption{(A-C) Lifts of nullhomotopic tours in $\Ca{m}{4}$ for $m=5, 6,$ and $7$  (D-E) Lifts of nullhomotopic tours in $\Ca{4}{n}$ for $n=5$ and $7$}
\label{fig:4xn}
\end{figure}

\subsection*{{\bf $4 \times n$}}
By Theorem \ref{thm:watkins}, when $n$ is even, $\Ca{4}{n}$ does not support a nullhomotopic tour.  When $n$ is odd, we construct nullhomotopic tours on $\Ca{4}{n}$ by induction. We take the path in $\Sc{4}$ shown in Frame (D) of Figure \ref{fig:4xn} as our base case.  Suppose there is a path in $\Sc{4}$ that uses the edges $(0,-n+1)-(2,-n+2)$, $(1,-n+1)-(3,-n+2)$, $(0,n-1)-(2,n)$, and $(1,n-1)-(3,n)$, and whose image in $\Ca{4}{n}$ is a nullhomotopic tour.  We replace these edges as follows:
\begin{itemize} 
\item insert the path $(0,-n+1)-(1,-n-1)-(3,-n)-(2,-n+2)$ and delete $(0,-n+1)-(2,-n+2)$,
\item insert the path $(1,-n+1)-(0,-n-1)-(2,-n)-(3,-n+2)$ and delete $(1,-n+1)-(3,-n+2)$,
\item insert the path $(0,n-1)-(1,n+1)-(3,n+2)-(2,n)$ and delete $(0,n-1)-(2,n)$, and
\item insert the path $(1,n-1)-(0,n+1)-(2,n+2)-(3,n)$ and delete $(1,n-1)-(3,n)$.
\end{itemize}
We these edge replacements, this new path in $\Sc{4}$ includes the edges $(0,-n-1)-(2,-n)$, $(1,-n-1)-(3,-n)$, $(0,n+1)-(2,n+2)$, and $(1,n+1)-(3,n+2)$, and the image of this path in $\Ca{4}{n+2}$ is a nullhomotopic tour.  This completes our induction.  An example of this process is shown in Frame (E) of Figure \ref{fig:4xn}.  Visually, this has the effect of adding two squares and two parallelograms, one of each at the top and one of each at the bottom, to our tour.

Shifting now to consider $\Ta{m}{n}$, other than for a few cases of small boards, the parites of $m$ and $n$ completely determine if it is possible to construct a nullhomotopic tour.

\begin{theorem}
When $m + n > 3$, the multigraph $\Ta{m}{n}$ supports a nullhomotopic tour if and only if at least one of $m$ or $n$ is even.
\label{cor:ToriNull}
\end{theorem}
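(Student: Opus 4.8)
The plan is to handle the two implications separately. For the ``only if'' direction, if $m$ and $n$ are both odd then, since $m+n>3$ forces at least one of them to exceed $1$, Proposition \ref{prop:oddeven} already shows that $\Ta{m}{n}$ admits no nullhomotopic tour. All the substance is in the ``if'' direction, and for it I would first isolate two reduction principles. The first is the coordinate-swap symmetry: the map $(a,b)\mapsto(b,a)$ is a multigraph isomorphism $\Ta{m}{n}\to\Ta{n}{m}$ compatible with the homeomorphism between the $m\times n$ torus and the $n\times m$ torus that interchanges the two circle factors and fixes the base point, so $\Ta{m}{n}$ supports a nullhomotopic tour if and only if $\Ta{n}{m}$ does. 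The second principle is that a nullhomotopic tour on $\Ca{m}{n}$ is automatically a nullhomotopic tour on $\Ta{m}{n}$.

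For the second principle, view $\Sc{m}$ as the subgraph of $\Pc$ induced on the vertices $(a,b)$ with $0\le a\le m-1$; on this subgraph the two covering maps agree, $\phi_t|_{\Sc{m}}=\phi_c$, because reducing a coordinate lying in $[0,m-1]$ modulo $m$ changes nothing. Given a nullhomotopic tour $f\colon(I,0)\to(\Ca{m}{n},(0,0))$, it is also a Hamiltonian tour in $\Ta{m}{n}$, since the two multigraphs share the same vertex set. By Corollary \ref{cor:cover}(1) its lift $\tilde f$ to $\Sc{m}$ satisfies $\tilde f(1)=(0,0)$; but $\tilde f$ takes values in $\Pc$ and $\phi_t\circ\tilde f=\phi_c\circ\tilde f=f$, so $\tilde f$ is also the lift of $f$ to $\Pc$ with initial point $(0,0)$. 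Since its terminal point is $(0,0)$, Corollary \ref{cor:cover}(3) shows $f$ is nullhomotopic in $\Ta{m}{n}$.

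Now suppose at least one of $m,n$ is even. Applying Theorem \ref{thm:cylnull} either to $\Ca{m}{n}$ or, after the swap, to $\Ca{n}{m}$ produces a nullhomotopic tour on one of these cylinders, hence (by the second principle) on $\Ta{m}{n}$, unless both of the ordered pairs $(m,n)$ and $(n,m)$ lie in the exclusion list of Theorem \ref{thm:cylnull}. Because at least one of $m,n$ is even, the ``both odd'' clause of that list applies to neither $\Ca{m}{n}$ nor $\Ca{n}{m}$, and a direct check of the remaining clauses shows that both pairs can lie in the list only when $(m,n)\in\{(2,2),(2,4),(4,2),(4,4)\}$; the pair $(4,2)$ reduces to $(2,4)$ by the swap. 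So the theorem reduces to exhibiting nullhomotopic tours on the three boards $\Ta{2}{2}$, $\Ta{2}{4}$, and $\Ta{4}{4}$.

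These three explicit constructions are where I expect the real difficulty to lie, precisely because the general machinery cannot reach them: none of $\Ca{2}{2}$, $\Ca{2}{4}$, $\Ca{4}{2}$, or $\Ca{4}{4}$ supports a nullhomotopic tour. Following the method of Section 4, for each board I would display an edge path in $\Pc$ from $(0,0)$ to $(0,0)$ that visits exactly one representative of each vertex class of the board; by Corollary \ref{cor:cover}(3) the image of such a path is a nullhomotopic tour. For $\Ta{2}{2}$, for instance, the edge path through $(0,0)$, $(1,2)$, $(3,3)$, $(2,1)$, $(0,0)$ in $\Pc$ works, since its projection cycles through all four squares once before returning to the base point. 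Similar explicit paths, best recorded by figures of their lifts to $\Pc$, dispatch $\Ta{2}{4}$ and $\Ta{4}{4}$ and complete the proof.
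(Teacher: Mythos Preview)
Your proposal is correct and follows essentially the same route as the paper: use Proposition~\ref{prop:oddeven} for the forward direction, reduce the backward direction via Theorem~\ref{thm:cylnull} applied to both $\Ca{m}{n}$ and $\Ca{n}{m}$, and then handle the residual boards $\Ta{2}{2}$, $\Ta{2}{4}$ (equivalently $\Ta{4}{2}$), and $\Ta{4}{4}$ by explicit lifts in $\Pc$. The paper phrases the reduction as ``$\Ta{m}{n}$ contains $\Ca{m}{n}$ and $\Ca{n}{m}$ as sub-multigraphs'' rather than invoking the coordinate-swap isomorphism, and is terser about why nullhomotopic on the cylinder implies nullhomotopic on the torus, but the logic is the same; your explicit $\Ta{2}{2}$ path matches what the paper records in a figure.
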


\begin{proof}
Corolloary \ref{prop:oddeven} establishes the forward direction.  For the backward direction, note that $\Ta{m}{n}$ contains $\Ca{m}{n}$ and $\Ca{n}{m}$ as  sub-multigraphs. If $\Ca{m}{n}$ or $\Ca{n}{m}$ support a nullhomotopic tour, then $\Ta{m}{n}$ must as well.  

Combining these observations with Theorem \ref{thm:cylnull}, it suffices to construct nullhomotopic tours on $\Ta{2}{2}$, $\Ta{4}{2}$ and $\Ta{4}{4}$.  We apply statement (3) of Corollary \ref{cor:cover}, and produce edge cycles in $\Pc$ in Frames (A), (B), and (C) of Figure \ref{fig:2xnTnull} whose respective images in $\Ta{2}{2}$, $\Ta{4}{2}$ and $\Ta{4}{4}$ are nullhomotopic tours.
\end{proof}

For small boards, $\Ta{1}{1}$ consists of a vertex and eight edges, and so the constant path is a nullhomotopic tour.  The pseudo-graphs $\Ta{1}{2}$ and $\Ta{2}{1}$ are isomorphic and do not admit tours because there are no bigons in $\Pc$.

\begin{figure}[t]
\subfloat[]{\fbox{\includegraphics[scale = .25]{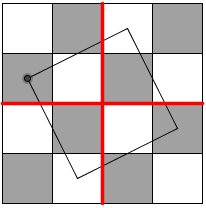}}}
\hfill
\subfloat[]{\fbox{\includegraphics[scale = .25]{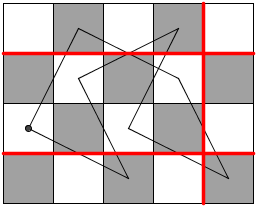}}}
\hfill
\subfloat[]{\fbox{\includegraphics[scale = .25]{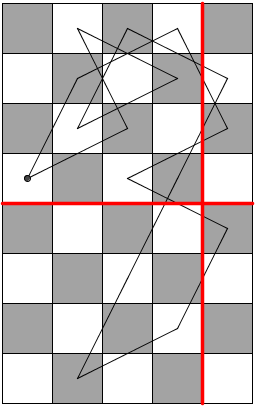}}}
\caption{Lifts of nullhomotopic tours in $\Ta{2}{2}$, $\Ta{4}{2}$, and $\Ta{4}{4}$}
\label{fig:2xnTnull}
\end{figure}

\section{\bf Generating Tours on Cylinders}

In this section, we characterize the values of $m$ and $n$ so that $\Ca{m}{n}$ supports a tour that realizes a generator.  More specifically, we prove:

\begin{theorem}
The graph $\Ca{m}{n}$ supports a tour that realizes a generator if and only if none of the following hold:
\begin{itemize}
\item $m = 1, 2, 4$, or
\item $m$ is even and $n$ is odd.
\end{itemize}
\label{thm:cylgen}
\end{theorem}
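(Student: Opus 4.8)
\emph{Necessity.} The nonexistence half follows quickly from results already available. Suppose $\Ca{m}{n}$ supports a tour realizing a generator; then it has a Hamiltonian cycle, so by Theorem \ref{thm:watkins} we cannot have $m=1$ with $n>1$, nor $m\in\{2,4\}$ with $n$ even. If $m=n=1$, then $\Ca{1}{1}$ has no edges, so its only tour is constant and by Corollary \ref{cor:cover}(2) does not realize a generator; hence $m\neq 1$. By Proposition \ref{prop:oddeven}, when $m$ is even there is no generating tour for $n$ odd, and together with the preceding exclusions this removes $m\in\{2,4\}$ altogether as well as every pair with $m$ even and $n$ odd. So none of the listed conditions can hold.

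\emph{Sufficiency: reduction.} Assume $(m,n)$ is \emph{admissible}, meaning $m\notin\{1,2,4\}$ and $m$ even implies $n$ even. By Corollary \ref{cor:cover}(2) it suffices to build a Hamiltonian tour of $\Ca{m}{n}$ whose lift to $\Sc{m}$ runs from $(0,0)$ to $(0,n)$, that is, an edge path in $\Sc{m}$ from $(0,0)$ to $(0,n)$ whose image under $\phi_c$ is a Hamiltonian tour. The crucial observation is that an \emph{open} knight's tour of the regular $m\times n$ board from $(0,0)$ to $(2,n-1)$ does exactly this: it is a Hamiltonian path in $\Ra{m}{n}\subset\Sc{m}$ lying in rows $0,\dots,n-1$, so it lifts to a path from $(0,0)$ to $(2,n-1)$ in $\Sc{m}$; appending the edge of $\Ca{m}{n}$ determined by the knight pair $(-2,1)$ at $(2,n-1)$ --- a cylindrical move that carries $(2,n-1)$ to $(0,0)$ and whose lift carries $(2,n-1)$ to $(0,n)$ --- completes the lift to the one we want. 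So the task becomes producing such open tours, falling back on hand constructions for boards too small for the general theory.

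\emph{Sufficiency: generic boards.} The squares $(0,0)$ and $(2,n-1)$ have opposite colors exactly when $n$ is even, and they agree in color with the (then uniformly colored) corners exactly when $m$ and $n$ are both odd. Hence if $n$ is even and $m,n\geq 6$, so that $mn$ is even, the theorem of Cannon and Dolan \cite{cannon-dolan} gives an open tour from $(0,0)$ to $(2,n-1)$; and if $m$ and $n$ are both odd with $m\geq 7$ and $n\geq 5$ (after transposing the board if necessary), Ralston's odd-tourability theorem \cite{ralston} does the same. Together these settle every admissible pair except those with $m=3$, those with $n\leq 4$, the pair $(5,5)$, and those with $m=5$ and $n$ even.

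\emph{Sufficiency: small boards and the main obstacle.} For the remaining boards I would argue as in the proof of Theorem \ref{thm:cylnull}: exhibit explicit edge paths in $\Sc{m}$ for a finite list of base boards --- verifying with Corollary \ref{cor:cover}(2) that each terminates at $(0,\pm n)$ --- and then induct with steps $m\mapsto m+2$ (sweeping out $\Ca{m}{1},\Ca{m}{2},\Ca{m}{3},\Ca{m}{4}$) and $n\mapsto n+2$ (sweeping out $\Ca{3}{n}$ and the boards $\Ca{5}{n}$ with $n$ even), via the same ``splice in a detour, delete an edge'' surgery used there. The essential new difficulty is that these surgeries must now control the \emph{endpoint} of the lifted path, not merely keep its image a Hamiltonian cycle: an $m\mapsto m+2$ detour must be confined to the two new columns and their immediate neighbors, so that the net vertical displacement, hence the endpoint $(0,n)$, is unchanged, whereas an $n\mapsto n+2$ detour through the two new rows must contribute exactly $\pm 2$ to the net vertical displacement, so that the endpoint moves from $(0,n)$ to $(0,n+2)$. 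Arranging detours of the required kind, and choosing base-case paths that truly end at $(0,\pm n)$, is the crux of this part.
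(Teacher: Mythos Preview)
Your proposal is correct and follows the paper's strategy exactly: necessity via Watkins and the parity obstruction of Proposition~\ref{prop:oddeven}, generic sufficiency by closing an open regular-board tour through a single cylindrical edge via Cannon--Dolan and Ralston, and small boards by explicit base cases plus edge-surgery induction in $\Sc{m}$. The only difference is that the paper actually carries out those small-board constructions (with figures, and using step sizes $m\mapsto m+2,\,m+3,\,m+4$ and $n\mapsto n+4$ as convenient rather than a uniform $+2$), whereas you leave that part as a plan; your identification of endpoint control as the crux is exactly right.
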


As shown by Corollary \ref{cor:cover}, one way to construct a tour on $\Ca{m}{n}$ that realizes a generator is to construct a tour that uses exactly one cylindrical move.  Such tours on cylinders correspond to open tours on regular boards where the starting and ending squares are connected in $\Ca{m}{n}$ by a cylindrical move.  Significant work has been completed studying open tours on $\Ra{m}{n}$, and we use two particular results from that area in the proof of Theorem \ref{thm:cylgen}.

\begin{theorem}[Cannon, Dolan \cite{cannon-dolan}]
The graph $\Ra{m}{n}$ supports an open knight's tour between any two squares of opposite colors if and only if the product $mn$ is even and $m$ and $n$ are both at least 6.
\label{thm:cannondolan}
\end{theorem}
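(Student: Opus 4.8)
The plan is to prove the two implications separately. For the ``only if'' direction, the constraint that $mn$ be even comes from the checkerboard colouring: a knight move always reverses colour, so an open tour on $\Ra{m}{n}$ traverses $mn-1$ edges, and its two endpoints have opposite colours exactly when $mn-1$ is odd, i.e.\ when $mn$ is even; when $mn$ is odd every open tour has same-coloured endpoints, so no open tour joins an opposite-coloured pair at all. The remaining part of necessity --- excluding boards with $\min(m,n)\le 5$ but $mn$ even --- I would handle by a finite case analysis, exhibiting for each such board an explicit pair of opposite-coloured squares admitting no joining tour: the cases $m\le 2$ are trivial (no long paths exist), $m=3$ uses the rigidity of the $3\times n$ knight's graph, $m=4$ uses Schwenk's auxiliary four-row colouring (which already obstructs many endpoint pairs), and $m=5$ uses forcing at the degree-two corner vertices and the low-degree near-corner vertices together with a careful choice of the pair.

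For the ``if'' direction I would argue by induction, after using $\Ra{m}{n}\cong\Ra{n}{m}$ to assume $6\le m\le n$. The base cases are $\Ra{6}{6}$ and $\Ra{6}{7}$ (together with any further small boards forced on us by a strengthened hypothesis; see below), verified by explicit construction. The inductive step is an \emph{expansion lemma}: if $\Ra{m}{n}$ has the property and $m\ge 6$, then so does the board obtained by adjoining a strip two squares wide along a side of length $m$. Since adjoining such a strip increases one dimension by $2$ and preserves the parity of that dimension, starting from $\Ra{6}{6}$ and $\Ra{6}{7}$ one reaches every $\Ra{m}{n}$ with $m,n\ge 6$ and $mn$ even. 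To connect the two designated squares $u$ and $v$ in the enlarged board one splits into cases by where $u$ and $v$ lie relative to the $m\times n$ block $B$ and the adjoined $m\times 2$ strip $A$: if exactly one of them lies in $A$, route a tour of $B$ from the block vertex to a square $p$ on the shared column, cross by a single knight move into $A$, and finish with a Hamiltonian path of $A$ to the strip vertex; if both lie in $A$, cover $B$ by a single tour between two shared-column squares and splice in the short strip; and if both lie in $B$, one needs $B$ to decompose into two vertex-disjoint spanning paths, from $u$ and from $v$ to two shared-column squares joined through a Hamiltonian path of $A$.

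The last of these cases is where I expect the main obstacle, and it forces a strengthening of the inductive statement to something like: for any opposite-coloured $u,v$ and any prescribed ``exit square'' $p$ on a chosen boundary column (with the natural colour/position compatibility), $\Ra{m}{n}$ admits either a $u$-to-$v$ tour or a spanning pair of disjoint paths with endpoints $\{u,p\}$ and $\{p',v\}$. Carrying this reinforced hypothesis through the induction, and reconciling it with the rigid forced structure near the corners --- where the desired entry/exit routes can collide with the only available corner edges --- is the combinatorial heart of the argument; making it uniform over all parities of $n$ and all positions of $u,v$, including the awkward situation where one of $u,v$ sits on or adjacent to the column along which the strip is glued, is the step that consumes the bulk of the work and the reason a sizeable, but finite, collection of base cases must be checked directly.
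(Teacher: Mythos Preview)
The paper does not prove this theorem at all: it is quoted as an external result of Cannon and Dolan \cite{cannon-dolan} and used as a black box (together with Ralston's theorem) to obtain Corollary~\ref{cor:cylgen}. There is therefore no proof in the paper to compare your proposal against.

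That said, a brief comment on your sketch. The ``only if'' direction is essentially right in spirit: the parity argument for $mn$ odd is correct, and the small-width obstructions you list are the standard ones. The ``if'' direction is where your proposal is more a plan than a proof. You correctly identify the crux --- the case where both endpoints lie in the old block and one needs a spanning two-path decomposition rather than a single Hamiltonian path --- and you propose strengthening the inductive hypothesis to carry an extra ``exit square'' parameter. This is the right instinct, but as stated it is not yet a proof: you have not specified the strengthened hypothesis precisely, checked that it is actually inductive (the two-path case generates further two-path requirements on the sub-board, and the colour/position constraints on the exit squares interact with the corner forcing in ways that can cascade), nor verified the enlarged base-case set it would require. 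Cannon and Dolan's original argument does follow a decomposition-and-splice strategy of this general shape, so your outline is on the right track, but the combinatorial bookkeeping you wave at in your last paragraph is exactly the content of their paper.
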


For regular $m \times n$ boards in which $mn$ is odd, it is reasonable to expect there to be open knight's tours beginning and ending at squares that have the same color as the corner squares.

\begin{theorem}[Ralston \cite{ralston}]
If $m$ and $n$ are both odd, both at least 5 with one not equal to 5, then for any pair of squares on the regular $m \times n$ board with same color as the corners, there exists an open knight's tour on $\Ra{m}{n}$ with those as the starting and ending squares.
\label{thm:ralston}
\end{theorem}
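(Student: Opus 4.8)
The plan is to prove Ralston's theorem by strong induction on the dimensions, reducing each board to a smaller odd-by-odd board of the same color type together with an even-width strip, and then splicing a tour on the strip into a tour on the sub-board supplied by the inductive hypothesis. First I would record the coloring constraint that forces the hypothesis. Since a knight's move reverses the checkerboard color and $mn$ is odd, the two color classes of the $m \times n$ board have sizes $\frac{mn+1}{2}$ and $\frac{mn-1}{2}$, and all four corners lie in the majority class. An open tour visits the colors alternately along its $mn$ squares, so with $mn$ odd it must both begin and end in the majority (corner) class. This is precisely why only pairs of corner-colored squares can serve as endpoints, and the theorem asserts that this obvious necessary condition is also sufficient once the board is large enough.

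Second, I would isolate a single \emph{splicing lemma} to drive the induction. Suppose a vertical cut partitions the board into a left piece $L$ carrying an open tour $P$ from $A$ to $B$, and a right piece $R$ carrying a Hamiltonian cycle $H$. If there is an edge $\{u,v\}$ consecutive along $P$ and an edge $\{x,y\}$ consecutive along $H$ such that $u$--$x$ and $v$--$y$ are both knight moves crossing the cut, then deleting $u$--$v$ from $P$ and $x$--$y$ from $H$ and inserting the two crossing moves yields the path $A \cdots u \,\to\, x \cdots y \,\to\, v \cdots B$, which is an open tour of the whole board with the \emph{same} endpoints $A$ and $B$. The substance of the lemma is that a compatible pair of edges can always be found in a bounded band around the cut; I would prove this by exhibiting, for each admissible strip width, an explicit family of crossing configurations and verifying that $H$ can be chosen to contain one of the required edges.

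Third, I would carry out the induction, taking $n \ge m$ and reducing the larger dimension by six. For $n \ge 11$ (and, when $m = 5$, for $n \ge 13$ so as never to land on the forbidden $5 \times 5$), cut off a right strip $R = m \times 6$; by Theorem \ref{thm:schwenk} this strip, having even area with neither dimension in $\{1,2,4\}$, admits a Hamiltonian cycle $H$, while the retained board $L = m \times (n-6)$ is again odd-by-odd of the same corner color with both dimensions at least $5$, so the inductive hypothesis furnishes a tour of $L$ realizing any prescribed corner-colored endpoints. The casework is on the positions of $A$ and $B$: when both lie in $L$ I apply the splicing lemma directly; when an endpoint lies in $R$ I first translate the cut (or pass to the analogous horizontal decomposition) so that $A$ and $B$ are pushed into the retained board, falling back on a dedicated \emph{strip-path lemma}, an open Hamiltonian path of $R$ with one endpoint prescribed at a controlled corner-colored square, whenever an endpoint cannot be dislodged from the outermost columns. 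For $m \ge 7$ one may alternatively peel strips with both dimensions at least six and invoke Theorem \ref{thm:cannondolan} to realize arbitrary opposite-color endpoints inside the strip, which streamlines the endpoint bookkeeping considerably.

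Finally, the base cases and the width-five obstruction are where the genuine work concentrates. The induction bottoms out at the finite list $5 \times 7$, $5 \times 9$, $5 \times 11$, $7 \times 7$, $7 \times 9$, $9 \times 9$ together with their transposes, each of which must be established by explicit construction, producing for every admissible endpoint pair an open tour; the symmetry group of the rectangle can be used to collapse the number of genuinely distinct pairs that need to be checked. The hard part will be the case $m = 5$: the strip $5 \times 6$ has a dimension below six, so Theorem \ref{thm:cannondolan} is unavailable and only the weaker closed-tour guarantee of Theorem \ref{thm:schwenk} applies, forcing the splicing and strip-path lemmas to be verified by hand for width-five strips and leaving much less freedom to relocate a stubborn endpoint out of the outer columns. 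I expect this width-five analysis, together with assembling and verifying a complete set of base cases, to be the principal obstacle; by contrast the generic inductive step for $m, n \ge 7$ should be comparatively routine once the splicing lemma is in place.
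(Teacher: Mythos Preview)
The paper does not prove this statement. Theorem~\ref{thm:ralston} is quoted as an external result of Ralston \cite{ralston} and is used, together with Theorem~\ref{thm:cannondolan}, only to derive Corollary~\ref{cor:cylgen}; no argument for it appears anywhere in the paper. So there is no ``paper's own proof'' to compare your proposal against.

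As for the proposal itself: your outline---peel an even-width strip carrying a closed tour, splice across the cut, and bottom out on a finite list of small odd-by-odd boards---is the standard architecture for results of this type and is broadly in the spirit of Ralston's original argument. But it is a plan, not a proof, and one step is under-specified in a way that matters. Your handling of the case where a prescribed endpoint lies in the strip $R$ relies on ``translating the cut'' so that both $A$ and $B$ end up in the retained board $L$. This can fail: if $A$ sits in the leftmost six columns and $B$ in the rightmost six, then peeling six columns from either side separates the endpoints, and passing to a horizontal decomposition faces the same obstruction when $A$ and $B$ are near opposite horizontal edges. Your fallback ``strip-path lemma'' then has to supply an open Hamiltonian path on an $m\times 6$ strip with a \emph{prescribed} endpoint at an arbitrary corner-colored square and with a second endpoint that is a knight's move from a controlled square in $L$; arranging this simultaneously with the splicing edge is exactly the delicate bookkeeping that carries the weight of the theorem, and your sketch does not indicate how it is discharged. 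Until that case and the full battery of base constructions are actually written out, the argument is incomplete.
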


In our context, Theorems \ref{thm:cannondolan} and \ref{thm:ralston} combine to demonstrate that:

\begin{corollary}
If $m$ and $n$ are both odd, and greater than or equal to 5 with at least one greater than 5, or $m$ and $n$ are both at least 6 and $n$ is even, then there is a knight's tour on $\Ca{m}{n}$ that realizes a generator.
\label{cor:cylgen}
\end{corollary}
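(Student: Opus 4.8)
The plan is to realise a generator by exhibiting, for each $(m,n)$ in the stated range, a Hamiltonian cycle in $\Ca{m}{n}$ that uses exactly one cylindrical move, chosen so that this move wraps once around the circumference. Fix the vertex $v = (1, n-2)$ and the cylindrical move $e$ determined by the knight pair $(-1,2)$ based at $v$; since $(1+(-1),\,(n-2+2)\bmod n) = (0,0)$ while $n-2+2 = n \notin \{0,1,\dots,n-1\}$, the edge $e$ is a genuine cylindrical move of $\Ca{m}{n}$ joining $v$ to the base point. If $\Ra{m}{n}$ admits an open knight's tour (a Hamiltonian path) from $(0,0)$ to $v$, then appending $e$ yields a Hamiltonian cycle $f\colon (I,0)\to(\Ca{m}{n},(0,0))$. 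Its lift $\tilde f$ to $\Sc{m}$ with $\tilde f(0)=(0,0)$ follows the open tour along regular moves only, so it reaches $v$ at its literal coordinates, and then traverses the lift of $e$, namely the edge $(-1,2)$ out of $(1,n-2)$, terminating at $(0,n)$. Statement (2) of Corollary \ref{cor:cover} then gives that $f$ realises a generator. So the whole problem reduces to producing the required open tour on $\Ra{m}{n}$.

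At this point the hypotheses split along the parity of $n$, and the colour conditions in the available existence theorems split with them. Colour each square $(a,b)$ by the parity of $a+b$; then $(0,0)$, and indeed all four corners of an odd-by-odd board, are one colour, say red. When $m$ and $n$ are both odd, $v=(1,n-2)$ has $1+(n-2)=n-1$ even, hence is red, the colour of the corners; since $m,n$ are odd, at least $5$, and not both $5$, Theorem \ref{thm:ralston} supplies an open tour on $\Ra{m}{n}$ from $(0,0)$ to $v$. When instead $n$ is even and $m,n\ge 6$, the square $v=(1,n-2)$ has $n-1$ odd, so $v$ and $(0,0)$ have opposite colours; since $mn$ is even and $m,n\ge 6$, Theorem \ref{thm:cannondolan} supplies an open tour on $\Ra{m}{n}$ from $(0,0)$ to $v$. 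In either case the first paragraph finishes the proof. (In the odd-by-odd case one could equally take $v=(2,n-1)$ with closing move $(-2,1)$; when $m$ is even and $n$ odd no such construction is possible, in agreement with Proposition \ref{prop:oddeven}.)

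This argument is short precisely because the combinatorial heavy lifting is imported wholesale from Ralston's and Cannon--Dolan's theorems; what requires care is only the covering-space bookkeeping. Concretely, one must verify that the single non-regular edge used really is a cylindrical move and not a regular one (it is, since it lands in row $n$, off the board), that its lift ends at $(0,n)$ rather than at some other lift $(0,kn)$ of the base point, and that the prescribed endpoint $v$ carries exactly the colour demanded by whichever existence theorem is invoked. The only genuine obstacle, then, is to notice that the parity of $n$ simultaneously dictates both which of the two theorems applies and whether $v$ is the same colour as or the opposite colour to the base point, so that the two cases fit together without leaving a gap.
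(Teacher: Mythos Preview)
Your proof is correct and follows exactly the approach the paper intends: the paragraph preceding Theorems~\ref{thm:cannondolan} and~\ref{thm:ralston} already observes that a tour using a single cylindrical move corresponds to an open tour on $\Ra{m}{n}$ whose endpoints are joined by that move, and the corollary is then stated without further proof as a direct consequence of those two theorems. You have simply made the implicit argument explicit by choosing a concrete endpoint $v=(1,n-2)$, verifying its colour in each parity case, and checking via Corollary~\ref{cor:cover} that the lift terminates at $(0,n)$.
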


Together, Corollary \ref{cor:cylgen} and Proposition \ref{prop:oddeven} establish Theorem \ref{thm:cylgen} for all $\Ca{m}{n}$ where $m \geq 6$ and $n \geq 5$.  So, we consider $\Ca{m}{n}$ where $m < 6$ or $n < 5$ and proceed by cases.
Our arguments often apply statement (2) from Corollary \ref{cor:cover}.  Specifically, we will produce paths in $\Sc{m}$ starting at $(0,0)$ and ending at $(0,n)$ or $(0,-n)$ whose image in $\Ca{m}{n}$ is a tour.

\subsection*{{\bf $m \times 1$}} 
Because $\Ca{1}{1}$ is a vertex with no edges, it does not support a tour realizing a generator.  By Proposition \ref{prop:oddeven}, $\Ca{m}{1}$ does not support a generating tour for even values of $m$.  For odd values of $m$, we use induction to produce a tour that realizes a generator on $\Ca{m}{1}$.  Consider the edge path in $\Sc{3}$ shown in Frame (A) of Figure \ref{fig:1xnGC} as our base case.  Suppose that there exists a path in $\Sc{m}$ whose image in $\Ca{m}{1}$ is a tour that realizes a generator and that uses the edge $(m-1, \frac{-m+3}{2})-(m-2, \frac{-m-1}{2})$.  Considering this as a path in $\Sc{m+2}$, we replace that edge with the 3-edge path $(m-1, \frac{-m+3}{2})-(m+1, \frac{-m+1}{2})-(m, \frac{-m-3}{2})-(m-2, \frac{-m-1}{2})$.  Note that this path in $\Sc{m+2}$ includes the edge $(m+1, \frac{-m+1}{2})-(m, \frac{-m-3}{2})$ and that the image of this path in $\Ca{m+2}{1}$ is a tour that realizes a generator.  This completes our induction.  An example of this process is shown in Frame (B) of Figure \ref{fig:1xnGC}. Visually, we are adding a $\sqrt{5} \times \sqrt{5}$ square to our path.

\begin{figure}[t]
\subfloat[]{\fbox{\includegraphics[scale = .27]{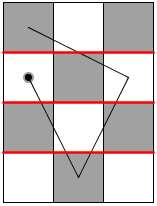}}}
\hfill
\subfloat[]{\fbox{\includegraphics[scale = .20]{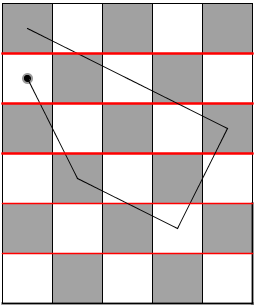}}}
\hfill
\subfloat[]{\fbox{\includegraphics[scale = .27]{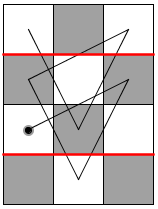}}}
\hfill
\subfloat[]{\fbox{\includegraphics[scale = .25]{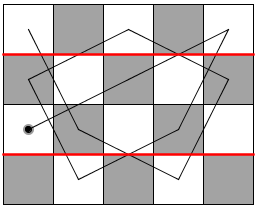}}}
\hfill
\subfloat[]{\fbox{\includegraphics[scale = .20]{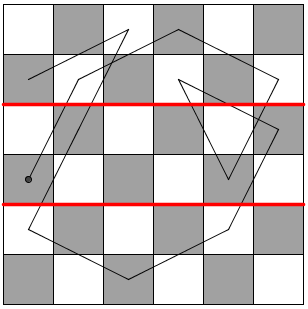}}}
\caption{(A-B) Lifts of tours in $\Ca{m}{1}$ realizing a generator for $m=3$ and $5$  (C-E) Lifts of tours in $\Ca{m}{2}$ realizing a generator for $m=3, 5,$ and $6$}
\label{fig:1xnGC}
\end{figure}

\subsection*{{\bf $1 \times n$}}
When $n>1$, Theorem \ref{thm:watkins} shows that $\Ca{1}{n}$ does not support a tour.  For the remainder of this section, we assume that $m$ and $n$ are greater than 1.

\subsection*{{\bf $m \times 2$}} 
By Theorem \ref{thm:watkins}, $\Ca{m}{2}$ does not support a tour when $m=2$ or $4$.  For all other values of $m$, we use induction to construct a tour on $\Ca{m}{2}$ that realizes a generator.  Consider the edge paths in $\Sc{3}$ shown in Frames (C) and (E) of Figure \ref{fig:1xnGC} as our base cases.  Suppose that there is a path in $\Sc{m}$ that includes the edges $(m-1, 2)-(m-2, 0)$ and $(m-1, 1)-(m-2, -1)$ whose image in $\Ca{m}{2}$ is a tour that realizes a generator.  Taking this as a path in $\Sc{m+2}$, we replace the edge $(m-1, 2)-(m-2, 0)$ with the path $(m-1, 2)-(m+1, 1)-(m, -1)-(m-2,0)$.  Additionally, we replace $(m-1,~1)~-~(m-2,~-1)$ with $(m-1, 1)-(m+1, 2)-(m, 0)-(m-2, -1)$.  Note that this new path in $\Sc{m+2}$ includes the edges $(m+1, 2)-(m, 0)$ and $(m+1, 1)-(m, -1)$ and that the image of this path in $\Ca{m+2}{2}$ is a tour that realizes a generator, completing our induction.  An example of this process is shown in Frame (D)  of Figure \ref{fig:1xnGC}. Visually we have added a square and a parallelogram to our path.

\subsection*{{\bf $2 \times n$}}
By Proposition \ref{prop:oddeven}, $\Ca{2}{n}$ cannot realize a generator for odd values of $n$.  By Theorem \ref{thm:watkins}, $\Ca{2}{n}$ cannot support a tour when $n$ is even.  For the rest of this section, we assume $m$ and $n$ are greater than 2.

\subsection*{{\bf $m \times 3$}} 
By Proposition \ref{prop:oddeven}, $\Ca{m}{3}$ does not support a tour that realizes a generator when $m$ is even.  When $m$ is odd and greater than 3, we inductively show that $\Ca{m}{3}$ supports a tour that realizes a generator using the paths in $\Sc{3}$ and $\Sc{5}$ shown in Frames (A) and (B) of Figure \ref{fig:3xnGC} as our base cases.  Assume there exists a path in $\Sc{m}$ that uses the edge $(m-1, 1)-(m-2, 3)$ and whose image in $\Ca{m}{3}$ realizes a generator. Considering this as a path in $\Sc{m+4}$ we add the path shown in Frame (D) of Figure \ref{fig:3xnGC} by placing this $4 \times 3$ board on the rectangle given by corners $(m, 1), (m,3), (m+3,3),$ and $(m+3,1)$.   Deleting $(m-1, 1)-(m-2, 3)$ and adding edges $(m-1,1)-(m, 3)$ and $(m-2, 3)-(m, 2)$ produces a path in $\Sc{m+4}$.  This path includes the edge $(m+3, 1)-(m+2, 3)$ and its image in $\Ca{m+4}{3}$ is a tour that realizes a generator, completing our induction.  An example of this process is shown in Frame (C) of Figure \ref{fig:3xnGC}.

\begin{figure}[t]
\subfloat[]{\fbox{\includegraphics[scale = .27]{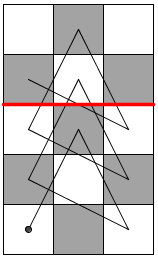}}}
\hfill
\subfloat[]{\fbox{\includegraphics[scale = .21]{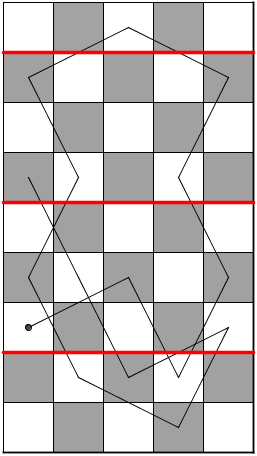}}}
\hfill
\subfloat[]{\fbox{\includegraphics[scale = .21]{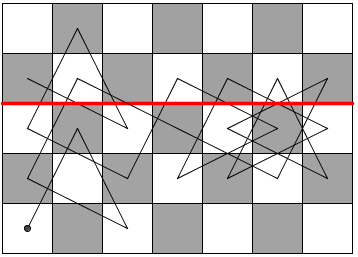}}}
\hfill
\subfloat[]{\fbox{\includegraphics[scale = .25]{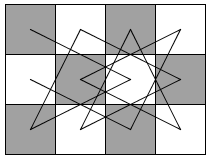}}}
\caption{(A-C) Lifts of tours in $\Ca{m}{3}$ realizing a generator for $m=3, 5,$ and $7$  (D) Path on $\Ra{4}{3}$ used in the induction step of the $m \times 3$ case}
\label{fig:3xnGC}
\end{figure}

\subsection*{{\bf $3 \times n$}}
Paths in $\Sc{3}$ whose images in $\Ca{3}{5}$ and $\Ca{3}{6}$ realize a generator are shown in Frames (B) and (C) of Figure \ref{fig:mx3CG} respectively.  For all values of $n$ except 5 and 6, we will inductively construct a tour in $\Ca{3}{n}$ that realizes a generator.  Consider the paths in $\Sc{3}$ in Frames (A), (D), (E), and (F) of Figure \ref{fig:mx3CG} whose respective images in $\Ca{3}{4}$, $\Ca{3}{7}$, $\Ca{3}{9}$, and $\Ca{3}{10}$ are tours that realize a generator as our base cases.  Suppose there exists a path in $\Sc{3}$ that begins at $(0,0)$, ends at $(0, -n)$, whose image in $\Ca{3}{n}$ is a tour that realizes a generator, and whose image in $\Ca{3}{n+4}$ is not incident to non-base point vertices $(a,b)$ for $b \leq 3$.  We concatenate this path with the result of translating the path shown in Frame (A) of Figure \ref{fig:mx3CG} downward by $n$.  The result is a path in $\Sc{3}$ that begins at $(0,0)$, ends at $(0, -n-4)$, whose image in $\Ca{3}{n+4}$ is a tour that realizes a generator, and whose image in $\Ca{3}{n+8}$ is not incident to non-base point vertices $(a,b)$ for $b \leq 3$.  This completes our induction.  Frame (G) of Figure \ref{fig:mx3CG} shows the result of applying this process to Frame (D).

\begin{figure}[t]
\subfloat[]{\fbox{\includegraphics[scale = .27]{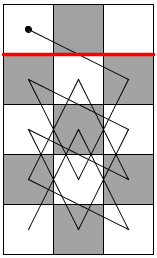}}}
\hfill
\subfloat[]{\fbox{\includegraphics[scale = .27]{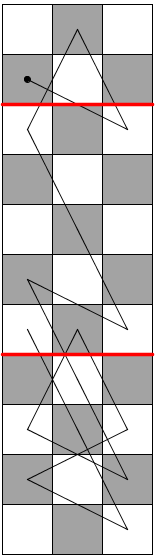}}}
\hfill
\subfloat[]{\fbox{\includegraphics[scale = .27]{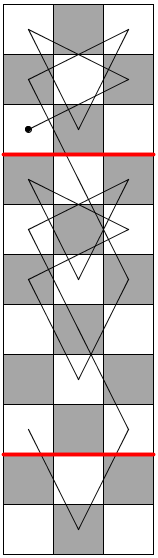}}}
\hfill
\subfloat[]{\fbox{\includegraphics[scale = .28]{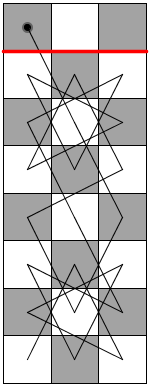}}}\\
\subfloat[]{\fbox{\includegraphics[scale = .27]{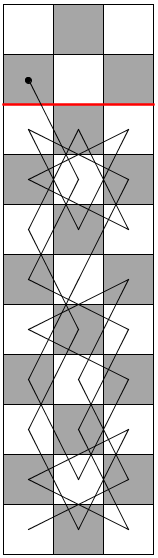}}}
\qquad \qquad
\subfloat[]{\fbox{\includegraphics[scale = .27]{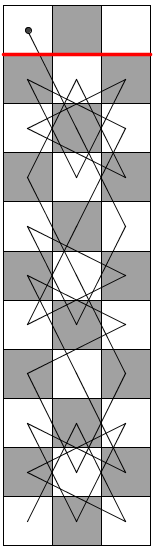}}}
\qquad \qquad
\subfloat[]{\fbox{\includegraphics[scale = .29]{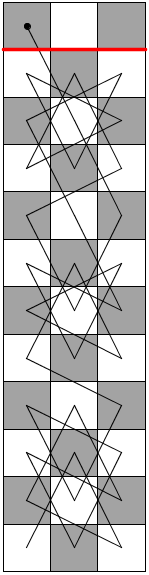}}}
\caption{Lifts of tours in $\Ca{3}{n}$ realizing a generator for $n=4, 5, 6, 7, 9, 10,$ and $11$}
\label{fig:mx3CG}
\end{figure}

\subsection*{{\bf $m \times 4$}} 
By Theorem \ref{thm:watkins}, $\Ca{4}{4}$ cannot support a tour realizing a generator.  Frames (A) through (C) of Figure \ref{fig:4xnCG} show paths in $\Sc{5}$, $\Sc{6}$, and $\Sc{7}$ whose respective images in $\Ca{5}{4}$, $\Ca{6}{4}$, and $\Ca{7}{4}$ are tours that each realize a generator.  For each $m \geq 5$, we use induction to construct tours on $\Ca{m}{4}$ taking the paths on $\Ca{5}{4}$, $\Ca{6}{4}$, and $\Ca{7}{4}$ as our base cases.  Assume there is a path in $\Sc{m}$ that uses the edges $(m-1, 0)-(m-2, 2)$ and $(m-2, 0)-(m-1, 2)$ whose image in $\Ca{m}{4}$ realizes a generator.  Taking our path to be in $\Sc{m+3}$, we place the $3 \times 4$ board shown in Frame (E) of Figure \ref{fig:4xnCG} on the rectangle given by the corners $(m, 0), (m,3), (m+2,3),$ and $(m+2,0)$.   Deleting the edges $(m-1, 0)-(m-2, 2)$ and $(m-2, 0)-(m-1, 2)$ and adding the edges $(m-2, 0)-(m, 1)$, $(m-1, 0)-(m+1, 1)$, $(m-2, 2)-(m, 3)$, and $(m-1, 2)-(m+1, 3)$ produces a path that includes the edges $(m+2, 0)-(m+1, 2)$ and $(m+1, 0)-(m+2, 2)$ and whose image in $\Ca{m+3}{4}$ is a tour that realizes a generator, completing the induction.  The result of applying this process to Frame (B) of Figure \ref{fig:4xnCG} is shown in Frame (D).
\begin{figure}[t]
\subfloat[]{\fbox{\includegraphics[scale = .15]{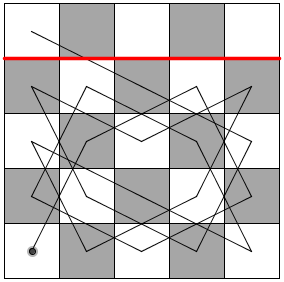}}}
\hfill
\subfloat[]{\fbox{\includegraphics[scale = .17]{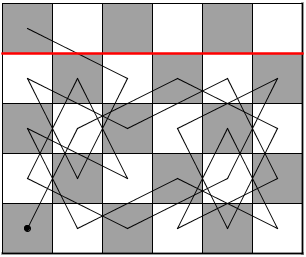}}}
\hfill
\subfloat[]{\fbox{\includegraphics[scale = .17]{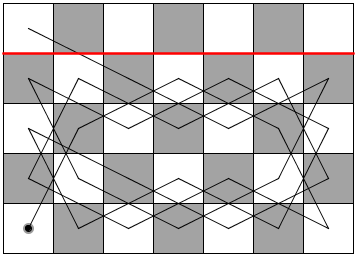}}}
\hfill
\subfloat[]{\fbox{\includegraphics[scale = .15]{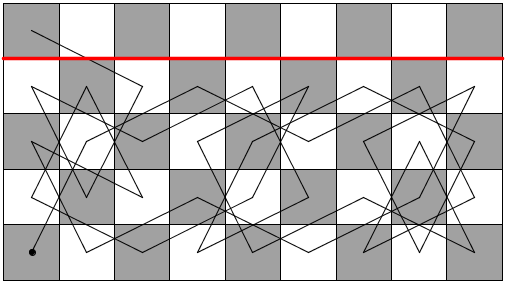}}}
\hfill
\subfloat[]{\fbox{\includegraphics[scale = .27]{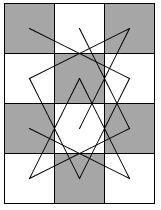}}}
\caption{(A-D) Lifts of tours in $\Ca{m}{4}$ realizing a generator for $m=5, 6, 7$, and $9$ (E) Path used in induction step of $m \times 4$ case}
\label{fig:4xnCG}
\end{figure}

\subsection*{{\bf $4 \times n$}}
$\Ca{4}{n}$ does not support a tour for  odd nor even $n$ by Proposition \ref{prop:oddeven} and Theorem \ref{thm:watkins} respectively. 

\subsection*{{\bf $5 \times n$}}
Frame (A) of Figure \ref{fig:mx5CG} shows that there exists a tour on $\Ca{5}{5}$ that realizes a generator.  For all odd values of $n$ with $n \geq 7$, there exists a tour that realizes a generator on $\Ca{5}{n}$ by Corollary \ref{cor:cylgen}.  For all even values of $n$ with $n \geq 6$ we will inductively show that $\Ca{5}{n}$ supports a tour that realizes a generator.  We use the paths in $\Sc{5}$ shown in Frame (A) of Figure \ref{fig:4xnCG} and Frame (B) of Figure \ref{fig:mx5CG} as our base cases.  Suppose we have a path in $\Sc{5}$ that begins at $(0,0)$, ends at $(0,-n)$, whose image in $\Ca{5}{n}$ realizes a generator, and whose image in $\Ca{5}{n+4}$ is not incident to non-base point vertices $(a,b)$ with $b \leq 3$.  Concatenating this path with the result of translating Frame (A) of Figure \ref{fig:4xnCG}  downward by $n$ gives a new path that ends at $(0,-n-4)$.  The image of this path in $\Ca{5}{n+4}$ realizes a generator, and the image in $\Ca{5}{n+8}$ is not incident to non-base point vertices $(a,b)$ with $b \leq 3$.  The result of applying this process to Frame (A) of Figure \ref{fig:4xnCG} is shown in Frame (C) of Figure \ref{fig:mx5CG}.

\begin{figure}[t]
\subfloat[]{\fbox{\includegraphics[scale = .25]{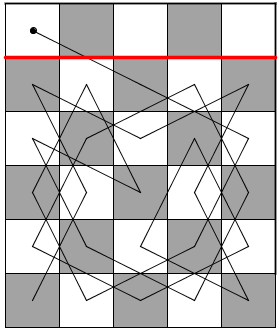}}}
\hfill
\subfloat[]{\fbox{\includegraphics[scale = .25]{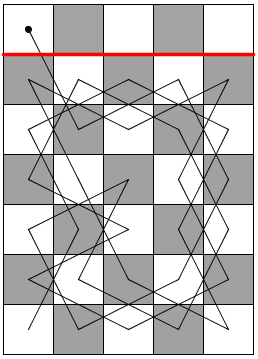}}}
\hfill
\subfloat[]{\fbox{\includegraphics[scale = .25]{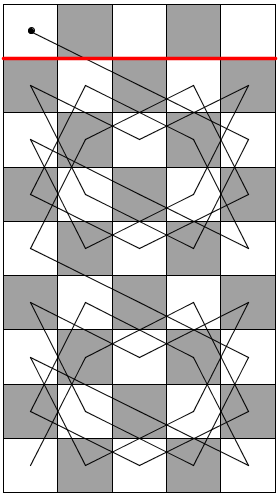}}}
\caption{Lifts of tours in $\Ca{5}{n}$ realizing a generator for $n=5, 6,$ and $8$}
\label{fig:mx5CG}
\end{figure}

\section{\bf Generating Tours on Tori}

The main result of this section is Theorem \ref{thm:torusgen} which extends Theorem \ref{thm:cylgen}.  Specifically, we characterize all $m$ and $n$ so that $\Ta{m}{n}$ admits a tour that realizes a longitude.

\begin{theorem}
When $m$ and $n$ are not both 1,  $\Ta{m}{n}$ supports a tour that realizes a longitude if and only if $m$ is odd or $n$ is even.
\label{thm:torusgen}
\end{theorem}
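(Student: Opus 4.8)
The plan is to mirror the structure of the proof of Theorem~\ref{thm:cylgen}, exploiting the fact that $\Ta{m}{n}$ contains both $\Ca{m}{n}$ and $\Ca{n}{m}$ as sub-multigraphs. First I would dispose of the "only if" direction: if $m$ is even and $n$ is odd, Proposition~\ref{prop:oddeven} immediately rules out a tour on $\Ta{m}{n}$ that realizes a longitude, since a longitude-realizing tour lifts to an edge path in $\Pc$ of even length whose endpoints $(0,0)$ and $(0,\pm n)$ have opposite colors in the bipartite two-coloring (as $n$ is odd). This is the same parity obstruction used for cylinders. For the "if" direction, the main observation is the containment: a tour on $\Ca{m}{n}$ that realizes a generator of $\pi_1(C,c)$, when pushed into $T$ via the inclusion $\Ca{m}{n} \hookrightarrow \Ta{m}{n}$ and then $j_t$, lifts in $\Pc$ to a path from $(0,0)$ to $(0,\pm n)$ — exactly the condition in statement~(4) of Corollary~\ref{cor:cover} — hence realizes a longitude of $T$. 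So whenever Theorem~\ref{thm:cylgen} gives a generator-realizing tour on $\Ca{m}{n}$, we are done.

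Next I would tabulate which pairs $(m,n)$ with $m$ odd or $n$ even are \emph{not} already covered by Theorem~\ref{thm:cylgen} applied to $\Ca{m}{n}$. By that theorem, $\Ca{m}{n}$ admits a generator-realizing tour unless $m \in \{1,2,4\}$ or ($m$ even, $n$ odd). The remaining cases split into two families. Family one: $n$ even and $m \in \{1,2,4\}$ — but here $m$ is even (or $m=1$), so we should instead try $\Ca{n}{m}$, i.e. swap the roles and ask whether $\Ca{n}{m}$ realizes a generator; since $n$ is even this needs $n \notin \{1,2,4\}$ and (the swapped parity condition) $m$ even, which fails exactly when $m$ is odd, i.e. $m=1$. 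Family two: $m$ odd and ($m$ even, $n$ odd) holds — impossible — so actually the only genuinely uncovered subcase with $m$ odd is when applying Theorem~\ref{thm:cylgen} to $\Ca{m}{n}$ fails for a reason other than parity, namely none; thus the residual cases are essentially small boards: $m=1$ with $n$ even, $m \in \{2,4\}$ with $n$ even, and their transposes, plus possibly $m=n$ small. For each of these finitely many residual boards I would exhibit an explicit edge cycle or edge path in $\Pc$ — using statement~(4) of Corollary~\ref{cor:cover} — whose image in $\Ta{m}{n}$ is a Hamiltonian tour landing at $(0,\pm n)$. For instance $\Ta{4}{2}$, $\Ta{2}{4}$, $\Ta{4}{4}$, $\Ta{1}{2}$, $\Ta{2}{2}$, and a handful of others should each be handled by a figure analogous to Figure~\ref{fig:2xnTnull}; note $\Ta{m}{n}$ has many more toroidal moves than $\Ca{m}{n}$, so small boards that were hopeless on the cylinder (e.g. $m=2$) become tractable on the torus, just as in the proof of Theorem~\ref{cor:ToriNull}.

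The step I expect to be the real obstacle is the bookkeeping of exactly which residual boards need explicit constructions, together with verifying that toroidal moves actually rescue the degenerate cases $m=2$ and $m=1$. On the cylinder $\Ca{2}{n}$ fails because $\Sc{2}$ has no suitable simple cycles; on the torus, $\Ta{2}{n}$ acquires wrap-around-in-the-$m$-direction moves (knight pairs $(2,\pm1)$ become $(0,\pm1)$-type identifications), so one must check case-by-case that a Hamiltonian path with the right endpoint displacement exists — and confirm it fails precisely when $m$ even, $n$ odd, consistent with the parity obstruction. I would also need to double-check the very small exceptional boards ($\Ta{1}{1}$ is excluded by hypothesis; $\Ta{1}{2} \cong \Ta{2}{1}$ admits no tour at all, as noted after Theorem~\ref{cor:ToriNull}, so these must be shown to be consistent with the statement — here $m=1$ is odd and $n=2$ is even, so the theorem \emph{claims} a longitude-realizing tour exists, which would be a contradiction unless $\Ta{1}{2}$ does admit a tour after all; resolving this boundary case carefully is essential). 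Modulo that finite verification, the argument is a straightforward reduction to Theorem~\ref{thm:cylgen} via the sub-multigraph inclusions.
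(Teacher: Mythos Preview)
Your ``only if'' direction and the basic reduction via the inclusion $\Ca{m}{n} \hookrightarrow \Ta{m}{n}$ are correct and match the paper. The genuine gap is your attempt to also exploit $\Ca{n}{m}$. A generator-realizing tour on $\Ca{n}{m}$ wraps once around the direction of circumference $m$; under the embedding $\Ca{n}{m} \hookrightarrow \Ta{m}{n}$ that direction is the \emph{latitudinal} direction of $T$, not the longitudinal one. Concretely, the lift of such a tour to $\Pc$ runs from $(0,0)$ to $(\pm m,0)$, not to $(0,\pm n)$, so by statement~(4) of Corollary~\ref{cor:cover} it does not realize a longitude. This shortcut buys you nothing toward the theorem.

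With that swap removed, the residual cases are not a finite list of small boards but three infinite families: $m=1$ with $n>1$; $m=2$ with $n$ even; and $m=4$ with $n$ even. The paper handles each family by an explicit inductive construction in $\Pc$: it exhibits base-case paths (for $\Ta{1}{2}$, $\Ta{1}{3}$, $\Ta{2}{2}$, $\Ta{4}{2}$, $\Ta{4}{4}$, $\Ta{4}{6}$) from $(0,0)$ to $(0,\pm n)$ whose images are Hamiltonian, and then gives an inductive step that extends the lifted path by concatenating a fixed small block, increasing $n$ by $2$ (or $4$) while preserving the Hamiltonian and endpoint conditions. So you need not just a handful of figures but a uniform extension scheme for each family.

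On your $\Ta{1}{2}$ worry: a longitude-realizing tour really does exist there---for instance the two-edge path $(0,0)$--$(2,1)$--$(0,2)$ in $\Pc$ projects to a Hamiltonian cycle on $\Ta{1}{2}$ ending at $(0,n)$---and the paper uses precisely such a tour as a base case. The remark after Theorem~\ref{cor:ToriNull} about $\Ta{1}{2}$ and bigons should be read in the context of \emph{nullhomotopic} tours (a two-edge nullhomotopic tour would have to lift to a bigon in $\Pc$); it does not obstruct longitude-realizing tours.
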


Note that if $\Ca{m}{n}$ admits a tour that realizes a generator, then $\Ta{m}{n}$ admits a tour that realizes a longitude.  By Theorem \ref{thm:cylgen}, it suffices to show that $\Ta{m}{n}$ admits a tour that realizes a longitude in the following cases: (i) $m=1$ and $n>1$, (ii) $m=2$ and $n$ is even, and (iii) $m=4$ and $n$ is even.  We apply statement (4) of Corollary \ref{cor:cover} in these arguments.  That is, we construct a tour in $\Ta{m}{n}$ that realizes a longitude by constructing a path in $\Pc$ that begins at $(0, 0)$ and ends at $(0, n)$ or $(0, -n)$ whose image in $\Ta{m}{n}$ is a tour.

\begin{figure}[t]
\subfloat[]{\fbox{\includegraphics[scale = .27]{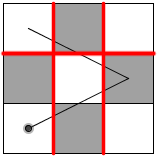}}}
\hfill
\subfloat[]{\fbox{\includegraphics[scale = .27]{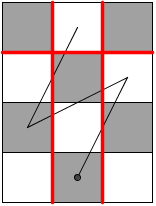}}}
\hfill
\subfloat[]{\fbox{\includegraphics[scale = .27]{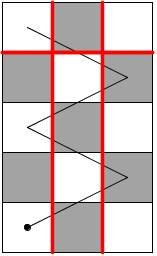}}}
\hfill
\subfloat[]{\fbox{\includegraphics[scale = .25]{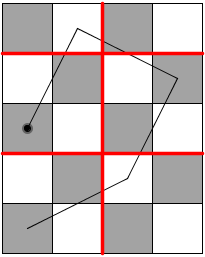}}}
\hfill
\subfloat[]{\fbox{\includegraphics[scale = .25]{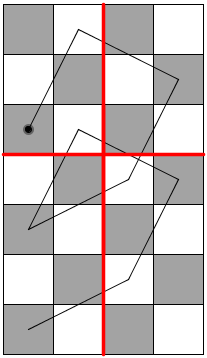}}}
\caption{(A-C) Lifts of tours realizing a generator in $\Ta{1}{n}$ for $n=2, 3,$ and $4$ (D-E) Lifts of tours realizing a generator in $\Ta{2}{n}$ for $n = 2$ and $4$}
\label{fig:mx1GT}
\end{figure}

\subsection*{{\bf $1 \times n$}}
We use induction to show that $\Ta{1}{n}$ supports a tour realizing a longitude for all values of $n > 1$ using the boards in Frames (A) and (B) of Figure \ref{fig:mx1GT} as our base cases.  Suppose there is a path beginning at $(0,0)$, ending at $(0,n)$ in $\Pc$, whose image in $\Ta{1}{n}$ realizes a longitude, and whose image in $\Ta{1}{n+2}$ is not incident to any $(a,b)$ with $b \geq n+1$.  Concatenating this path with $(0,n)-(2,n+1)-(0,n+2)$ gives a path in $\Pc$ beginning at $(0,0)$, ending at $(0,n+2)$, whose image in $\Ta{1}{n+2}$ realizes a longitude, and whose image in $\Ta{1}{n+4}$ is not incident to any $(a,b)$ with $b \geq n+3$.  Frame (C) of Figure \ref{fig:mx1GT} shows the result of applying this process to Frame (A).

\subsection*{{\bf $2 \times n$}}
We use induction to show that $\Ta{2}{n}$ supports a tour that realizes a longitude when $n$ is even, using the tour on $\Ta{2}{2}$ shown in Frame (D) of Figure \ref{fig:mx1GT} as our base case.  Suppose we have a path in $\Pc$ that begins at $(0,0)$, ends at $(0,-n)$, whose image in $\Ta{2}{n}$ realizes a longitude, and whose image in $\Ta{2}{n+2}$ is not incident to $(0,1), (1,3),$ and $(1,4)$.  Concatenating this path in $\Pc$ with $(0,-n)-(1,-n+2)-(3,-n+1)-(2,-n-1)-~(0,~-n~-~2)$ yields a path that begins at $(0,0)$, ends at $(0,-n-2)$, whose image in $\Ta{2}{n+2}$ realizes a longitude, and whose image in $\Ta{2}{n+4}$ is never incident to $(0,1), (1,3),$ and $(1,4)$.  Frame (E) of Figure \ref{fig:mx1GT} shows the result of applying this process to Frame (D).

\subsection*{{\bf $4 \times n$}}
The multigraph $\Ta{4}{2}$ supports a tour realizing a longitude as shown by the path in $\Pc$ in Frame (A) of Figure \ref{fig:mx4GT}.  For all even values of $n \geq 4$, we use induction to show that $\Ta{4}{n}$ supports a tour that realizes a longitude with Frames (B) and (C) as our base cases.  Suppose we have a path in $\Pc$ that begins at $(0,0)$, ends at $(0,-n)$, whose image in $\Ta{4}{n}$ realizes a longitude, and whose image in $\Ta{4}{n+4}$ is never incident to non-base point vertices $(a,b)$ where $b \leq 3$.  We concatenate the path in $\Pc$ with the image of Frame (B) of Figure \ref{fig:mx4GT} under downward translation by $n$.  This yields a path in $\Pc$ that begins at $(0,0)$,  ends at $(0,-n-4)$, whose image in $\Ta{4}{n+4}$ realizes a longitude, and whose image in $\Ta{4}{n+8}$ is never incident to non-base point vertices $(a,b)$ where $b \leq 3$.  Frame (D) of Figure \ref{fig:mx4GT} shows the result of applying this process to Frame (B).

\begin{figure}[t]
\subfloat[]{\fbox{\includegraphics[scale = .25]{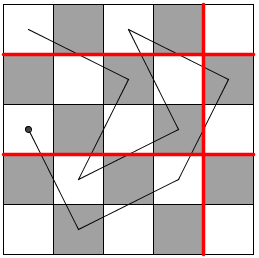}}}
\hfill
\subfloat[]{\fbox{\includegraphics[scale = .25]{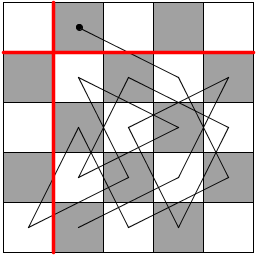}}}
\hfill
\subfloat[]{\fbox{\includegraphics[scale = .2]{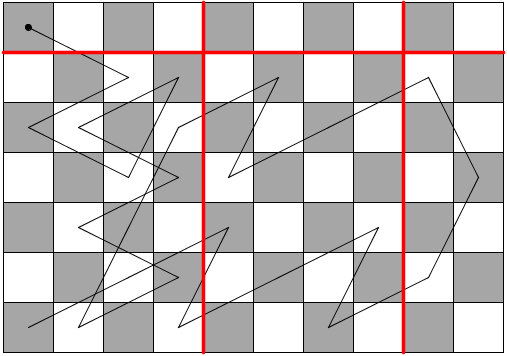}}}
\hfill
\subfloat[]{\fbox{\includegraphics[scale = .2]{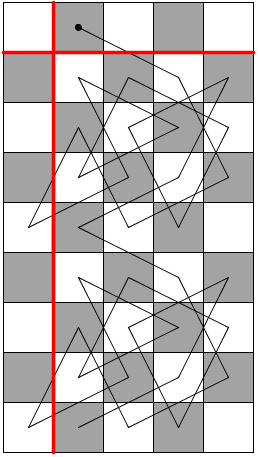}}}
\caption{Lifts of tours realizing a generator in $\Ta{4}{n}$ for $n=2, 4, 6$, and $8$} 
\label{fig:mx4GT}
\end{figure}

\section{\bf Future Work}

This work constitutes the first step in understanding the topology of knight's tours on surfaces.  We are interested in the following general question: given a surface $S$ with dimensions $m \times n$ and an element $\lambda \in~\pi_1(S)$, is there a knight's tour on $S$ that realizes $\lambda$?  Theorems \ref{thm:cylnull}, \ref{cor:ToriNull}, \ref{thm:cylgen}, and \ref{thm:torusgen} settle some of the simplest cases of this question, specifically when $S$ is a cylinder or torus and $\lambda$ is the identity, a generator of the fundamental group the cylinder, or the homotopy class of a longitude in a torus.

One can extend this work by considering other surfaces.  The topology of knight's tours on the M\"obius strip, the Klein bottle, and the real projective plane have not been studied.  Another avenue of extension is to characterize, for a fixed surface $S$, which elements of $\pi_1(S)$ can be realized by knight's tours.  Little work has been done on the topology of knight's tours; it is a topic with many open and interesting questions.

\section*{\bf Acknowledgements}

The second author gratefully acknowledges the support of Stockton University's Fellowship for Distinguished Students which made this work possible. 
\bibliographystyle{plain}

\end{document}